\documentclass[preprint,12pt]{elsarticle}

\usepackage{amscd}
\usepackage{amsmath}
\usepackage{amsthm}
\usepackage{amsfonts}
\usepackage{graphicx}
\usepackage{amssymb}
\usepackage{color}
\usepackage{amsbsy}
\usepackage{graphicx}
\usepackage{amssymb,color,amsbsy}
\usepackage{mathtools}
\usepackage{leftindex}

\usepackage{float}
\usepackage{comment}

\usepackage{stmaryrd}

\usepackage[ruled]{algorithm2e}

\usepackage[matrix,arrow]{xy}

\DeclareMathAlphabet{\mathpzc}{OT1}{pzc}{m}{it}

\usepackage{pgfpages}
\usepackage{tikz}
\usetikzlibrary{shapes.geometric}
\usetikzlibrary{decorations.pathmorphing}
\usetikzlibrary{arrows}
\usetikzlibrary{backgrounds}
\usetikzlibrary{positioning}
\usetikzlibrary{fit}
\usepackage{caption}
\usepackage{amsthm}

\usepackage{amsmath}

\usepackage[pagebackref=true, bookmarksopen=true,colorlinks=true, linkcolor=red,citecolor=blue]{hyperref}

\usepackage[capitalise]{cleveref}  

\global\long\def\ii{\cap}%

\global\long\def\u{\cup}%

\global\long\def\I{\bigcap}%

\global\long\def\U{\bigcup}%

\global\long\def\s{\subset}%

\global\long\def\ud{\mathbin{\dot{\u}}}%

\global\long\def\ñ{\sim}%

\global\long\def\Le{\le}%

\global\long\def\core#1{\textnormal{core}(#1)}%

\global\long\def\corona#1{\textnormal{corona}(#1)}%

\global\long\def\a#1{\left|#1\right|}%

\usepackage{tikz-cd}

\usepackage{tkz-graph}

\usepackage{multicol}

\usepackage{subcaption}

\newtheorem{theorem}{Theorem}[section]

\newtheorem{conjecture}[theorem]{Conjecture}
\newtheorem{corollary}[theorem]{Corollary}

\newtheorem{lemma}[theorem]{Lemma}

\newtheorem{problem}[theorem]{Problem}

\newtheorem{observation}[theorem]{Observation}

\usepackage{comment}
\begin{document}

\begin{abstract}
Let $\core G$ and $\corona G$ denote the intersection and the union,
respectively, of all maximum independent sets of a graph $G$.

In this work, we show that for a graph with at most two odd cycles,
$\a{\core G}+\a{\corona G}$ is equal to $2\alpha(G)$, $2\alpha(G)+1$,
or $2\alpha(G)+2$, and we precisely characterize when each value
occurs.

We further characterize graphs with at most two odd cycles that admit the core--corona partition
$V(G)=\corona G\ud N(\core G)$, extending known results for König--Egerváry and almost bipartite graphs.

Deciding whether $\core G=\emptyset$ is known to be \textbf{NP}-hard.
As an algorithmic consequence of the obtained results, we show that
the core, independence number and the corona can  be computed in polynomial time for
this class of graphs.
	
\end{abstract}

\begin{keyword} 	Maximum Independent Set, Kőnig-Egerváry
	graph, Corona, Core, 2-bicritical, Characterization 	\MSC 05C70, 05C75 \end{keyword}
\begin{frontmatter} 
	\title{Structural and Polynomial-Time Results on Core and Corona in Odd-Bicyclic Graphs}


	
	\author[IMASL,DEPTO]{Kevin Pereyra} 	\ead{kdpereyra@unsl.edu.ar}


	\address[IMASL]{Instituto de Matem\'atica Aplicada San Luis, Universidad Nacional de San Luis and CONICET, San Luis, Argentina.}
	\address[DEPTO]{Departamento de Matem\'atica, Universidad Nacional de San Luis, San Luis, Argentina.} 	
	
	\date{Received: date / Accepted: date} 
	
\end{frontmatter} %

\section{Introduction}\label{sss0}

Let $\alpha(G)$ denote the cardinality of a maximum independent set,
and let $\mu(G)$ be the size of a maximum matching in $G=(V,E)$.
It is known that $\alpha(G)+\mu(G)$ equals the order of $G$,
in which case $G$ is a König--Egerváry graph 
\cite{deming1979independence,gavril1977testing,sterboul1979characterization}.
K\H{o}nig-Egerv\'{a}ry graphs have been extensively studied
\cite{bourjolly2009node,jarden2017two,levit2006alpha,levit2012critical,jaume5404413kr}.
It is known that every bipartite graph is a König--Egerváry graph 
\cite{egervary1931combinatorial}. These graphs were independently introduced by Deming \cite{deming1979independence}, Sterboul \cite{sterboul1979characterization}, and \cite{gavril1977testing}. A graph $G$ is almost bipartite if it has only one odd cycle.
Let $\Omega^{*}(G)=\left\{ S:S\textnormal{ is an independent set of }G\right\}$,
$\Omega(G)=\{S:S$ is a maximum independent set of $G\}$,
$\textnormal{core}(G)=\I\left\{ S:S\in\Omega(G)\right\}$ 
\cite{levit2003alpha+}, and 
$\textnormal{corona}(G)=\U\left\{ S:S\in\Omega(G)\right\}$ 
\cite{boros2002number}.

A fundamental result shows that, for every König--Egerváry graph $G$, the
equality
$|\core G|+|\corona G|=2\alpha(G)$ holds \cite{levit2011set}, while for
almost bipartite non-König--Egerváry graphs one has
$|\core G|+|\corona G|=2\alpha(G)+1$ \cite{levit2025almost}. Moreover, for
both König--Egerváry graphs \cite{levit2003alpha+} and almost bipartite
non-König--Egerváry graphs \cite{levit2025almost}, the vertex set admits
the partition
\[
V(G)=\corona G\;\dot{\cup}\;N(\core G).
\]
This partition provides strong and useful structural information for the
study of these graphs.

The purpose of this paper is to further extend this theory to graphs with
\emph{at most two odd cycles}. This class naturally extends the known
theory for König--Egerváry and almost bipartite graphs, while still
retaining enough structure to allow a precise analysis. At the same
time, it represents a boundary case where new behavior appears: graphs
in this family may exhibit several distinct values of
$|\core G|+|\corona G|$, depending on the interaction and relative
position of their odd cycles.

Our approach is based on Larson’s independence decomposition, which
splits an arbitrary graph into a König--Egerváry part and a complementary
subgraph that is $2$-bicritical \cite{larson2011critical}. This reduction
allows us to transfer known results from the König--Egerváry setting to a
non--König--Egerváry framework and to isolate the contribution of the odd
cycles. Using this method, we obtain a complete structural
characterization of the possible values of $|\core G|+|\corona G|$ for
graphs with at most two odd cycles.

In addition, we show that every graph with at most two odd cycles admits
the core--corona partition
\[
V(G)=\corona G\;\dot{\cup}\;N(\core G).
\]

From an algorithmic point of view, the core plays a particularly
important role. While deciding whether $\core G=\emptyset$ is known to be
\textbf{NP}-hard in general \cite{boros2002number}, we show that, for
graphs with at most two odd cycles, the situation is radically
different. As a consequence of our structural results, we prove that the
core and the corona can both be computed in polynomial time for this
class of graphs.

\medskip

The paper is organized as follows.
In \cref{sss0} we present the general context of the problem and
introduce the fundamental concepts.
In \cref{sss1} we fix the notation used throughout the paper.
In \cref{sss2} we devote to the structural analysis of the quantity
$|\core G|+|\corona G|$, culminating in a complete classification for
graphs with at most two odd cycles.
In \cref{sss3} we study the core--corona partition and prove that it
holds throughout this class.
Finally, \cref{sss4} discusses algorithmic consequences and establishes
polynomial-time procedures for computing $\alpha(G)$, $\core G$, and
$\corona G$, and proposes some conjectures and open problems.

\section{Preliminaries}\label{sss1}
All graphs considered in this paper are finite, undirected, and simple. 
For any undefined terminology or notation, we refer the reader to 
Lovász and Plummer \cite{LP} or Diestel \cite{Distel}.

Let \( G = (V, E) \) be a simple graph, where \( V = V(G) \) is the finite set of vertices and \( E = E(G) \) is the set of edges, with \( E \subseteq \{\{u, v\} : u, v \in V, u \neq v\} \). We denote the edge \( e=\{u, v\} \) as \( uv \). A subgraph of \( G \) is a graph \( H \) such that \( V(H) \subseteq V(G) \) and \( E(H) \subseteq E(G) \). A subgraph \( H \) of \( G \) is called a \textit{spanning} subgraph if \( V(H) = V(G) \). 

Let \( e \in E(G) \) and \( v \in V(G) \). We define \( G - e := (V, E - \{e\}) \) and \( G - v := (V - \{v\}, \{uw \in E : u,w \neq v\}) \). If \( X \subseteq V(G) \), the \textit{induced} subgraph of \( G \) by \( X \) is the subgraph \( G[X]=(X,F) \), where \( F:=\{uv \!\in\! E(G) : u, v \!\in \! X\} \).

The number of vertices in a graph $G$ is called the \textit{order} of the graph and denoted by $\left|G\right|$ or $n(G)$.
A \textit{cycle} in $G$ is called \textit{odd} (resp. \textit{even}) if it has an odd (resp. even) number of edges.

For a vertex $v\in V(G)$, the \emph{neighborhood} of $v$ is
\[
N_G(v)=\{u\in V(G): uv\in E(G)\}.
\]
When no confusion arises, we write $N(v)$ instead of $N_G(v)$. For a set $S\subseteq V(G)$, the \emph{neighborhood} of $S$ is
\[
N_G(S)=\bigcup_{v\in S} N_G(v).
\]

A \textit{matching} \(M\) in a graph \(G\) is a set of pairwise non-adjacent edges. The \textit{matching number} of \(G\), denoted by  \(\mu(G)\), is the maximum cardinality of any matching in \(G\). Matchings induce an involution on the vertex set of the graph: \(M:V(G)\rightarrow V(G)\), where \(M(v)=u\) if \(uv \in M\), and \(M(v)=v\) otherwise. If \(S, U \subseteq V(G)\) with \(S \cap U = \emptyset\), we say that \(M\) is a matching from \(S\) to \(U\) if \(M(S) \subseteq U\). A matching $M$ is \emph{perfect} if $M(v)\neq v$ for every vertex
of the graph.

A vertex set \( S \subseteq V \) is \textit{independent} if, for every pair of vertices \( u, v \in S \), we have \( uv \notin E \). 
The number of vertices in a maximum independent set is denoted by \( \alpha(G) \).  A \textit{bipartite} graph is a graph whose vertex set can be partitioned into two disjoint independent sets.

\section{Structural Aspects of $\a{\core G}+\a{\corona G}$}\label{sss2}

In this section we focus on the structural analysis of the quantity
$|\core G|+|\corona G|$. Our main goal is to determine all possible
values of this parameter for graphs with at most two odd cycles and to
characterize precisely when each value occurs. In \cite{larson2011critical}, Larson introduces the following decomposition theorem.

\begin{theorem}[\cite{larson2011critical}\label{larsonthm}]
	For any graph $G$, there is a unique set $L(G)\s V(G)$
	such that
	\begin{enumerate}
		\item $\alpha(G)=\alpha(G[L])+\alpha(G[V(G)-L(G)])$,
		\item $G[L(G)]$ is a König-Egerváry graph,
		\item for every non-empty independent set $I$ in $G[V(G)-L(G)]$, we have $\left|N(I)\right|>\left|I\right|,$
		and
		\item for every maximum critical independent set $J$ of $G$,  $L(G)=J\u N(J)$.
	\end{enumerate}
\end{theorem}

Throughout the remainder of the paper, $L(G)$ and $L^{c}(G)=V(G)-L(G)$ denote the sets
of \cref{larsonthm}; moreover, to simplify the notation, we define the induced graphs
\begin{align*}
	L_{G} & :=G[L(G)],\\
	L_{G}^{c} & :=G[L^{c}(G)].
\end{align*}

\medskip
\medskip

The notion of 2-bicritical graphs was introduced
in \cite{pulleyblank1979minimum}, and they can be characterized as follows.

\begin{theorem}[\cite{pulleyblank1979minimum}\label{1928u3123}]
	A graph $G$ is $2$-bicritical if and only if $\left|N(S)\right|>\left|S\right|$
	for every nonempty independent set $S\subseteq V(G)$.
\end{theorem}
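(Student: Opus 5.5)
Since the paper does not restate the definition, I take Pulleyblank's: $G$ is $2$-bicritical if $G-v$ has a perfect $2$-matching for every $v\in V(G)$. A perfect $2$-matching is an assignment $w:E(G)\to\{0,1,2\}$ such that $\sum_{e\ni u}w(e)=2$ for every vertex $u$. The plan is to first prove the classical Tutte-type criterion: \emph{a graph $H$ has a perfect $2$-matching if and only if $|N_H(S)|\ge |S|$ for every independent set $S\subseteq V(H)$.} The stated equivalence then follows by a short vertex-deletion argument.

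For the criterion I will use the bipartite double cover $B(H)$. It has vertex classes $V'=\{u':u\in V(H)\}$ and $V''=\{u'':u\in V(H)\}$, and edges $u'v''$ whenever $uv\in E(H)$.
\begin{enumerate}
\item A perfect matching of $B(H)$ is a permutation $\sigma$ of $V(H)$ with $u\sigma(u)\in E(H)$ for all $u$. I decompose $\sigma$ into cycles: a $2$-cycle $(u\,v)$ gives the edge $uv$ weight $2$, and a cycle of length at least $3$ gives weight $1$ to each edge of the corresponding cycle of $H$. This produces a perfect $2$-matching.
\item Conversely, a perfect $2$-matching splits into doubled edges and weight-$1$ cycles. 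Orienting each cycle and swapping along each doubled edge gives such a $\sigma$.
\item Hence $H$ has a perfect $2$-matching if and only if $B(H)$ has a perfect matching. By Hall's theorem this holds if and only if $|N_H(X)|\ge|X|$ for every $X\subseteq V(H)$.
\item The condition over all $X$ reduces to independent sets. Given $X$, let $T=X\cap N_H(X)$ and $S=X\setminus N_H(X)$. Then $S$ is independent and no vertex of $S$ has a neighbour in $X$, so $N_H(S)\cap X=\emptyset$. Since $N_H(X)\supseteq T\cup N_H(S)$ with these two sets disjoint, we get $|N_H(X)|\ge |T|+|N_H(S)|\ge |T|+|S|=|X|$.
\end{enumerate}

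Now the theorem itself, assuming $|V(G)|\ge 2$ to avoid degenerate conventions.

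\emph{If $|N(S)|>|S|$ for every nonempty independent $S$:} fix $v$ and let $S$ be an independent set of $G-v$. If $S=\emptyset$ there is nothing to check. Otherwise $S$ is independent in $G$ and $|N_{G-v}(S)|\ge |N_G(S)|-1\ge |S|$. By the criterion, $G-v$ has a perfect $2$-matching, so $G$ is $2$-bicritical.

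\emph{Conversely,} suppose some nonempty independent $S$ has $|N(S)|\le|S|$.
\begin{itemize}
\item If $N(S)\neq\emptyset$, pick $v\in N(S)$. Then $S$ is independent in $G-v$ and $|N_{G-v}(S)|=|N(S)|-1<|S|$, so $G-v$ has no perfect $2$-matching.
\item If $N(S)=\emptyset$, then $G$ has an isolated vertex $u$. Choosing $v\neq u$, the set $\{u\}$ violates the criterion in $G-v$.
\end{itemize}
Either way $G$ is not $2$-bicritical.

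The main obstacle is step 4, the reduction from arbitrary $X$ to independent sets. The key observation is that $S=X\setminus N(X)$ has no neighbours inside $X$, which makes $T$ and $N(S)$ disjoint. Everything else is Hall's theorem plus bookkeeping of cycle decompositions.
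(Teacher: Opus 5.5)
Your proof is correct. The paper gives no argument for this statement: it is quoted from Pulleyblank, and the paper never states the definition of $2$-bicritical, so there is no internal proof to compare against.

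What you supply is the standard self-contained route. You derive Tutte's criterion for perfect $2$-matchings from Hall's theorem on the bipartite double cover, together with your reduction of an arbitrary $X$ to the independent set $X\setminus N(X)$. You then finish with a one-vertex-deletion argument in each direction.

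Compared with the bare citation, this pins down the definition. It also exposes a nondegeneracy hypothesis that the printed statement omits. $K_1$ is vacuously $2$-bicritical, since the graph with no vertices has a perfect $2$-matching, yet $S=V(K_1)$ has $|N(S)|=0<1=|S|$. So your restriction to $|V(G)|\ge 2$ is genuinely necessary, not just a convenience.

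Within the paper, the theorem is only used to pass from the neighbourhood condition in Larson's theorem to the conclusion that $L_G^c$ is $2$-bicritical. In your write-up, that is the direction needing the substantive half of Tutte's criterion, namely Hall sufficiency plus your step 4. The converse only uses the easy necessity of the neighbourhood condition.
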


The class of $2$-bicritical graphs can be regarded as the structural counterpart of König--Egerváry graphs \cite{larson2011critical}. In recent works, several new properties of $2$-bicritical graphs have been established; see, for instance, \cite{kevinSDKECHAR, kevinSDKEGE, kevinPOSYFACTOR}.

\medskip

\begin{observation}
	By \cref{larsonthm}, for every graph $G$ with $L^c(G)\neq \emptyset$, it follows that $L_{G}^{c}$ is a 2-bicritical graph.
\end{observation}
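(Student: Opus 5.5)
The observation follows by combining item 3 of \cref{larsonthm} with the characterization in \cref{1928u3123}. Assume $L^c(G)\neq\emptyset$, so that $L_G^c=G[L^c(G)]$ is a genuine graph with at least one vertex. By \cref{1928u3123}, it suffices to check that $|N_{L_G^c}(S)|>|S|$ for every nonempty independent set $S$ of $L_G^c$. Item 3 of \cref{larsonthm} provides precisely this inequality, so the plan reduces to confirming that the two statements are about the same neighborhood.

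The point to watch is that item 3 writes $N(I)$ without a subscript. One must make sure that the neighborhood is taken in $L_G^c$ and not in $G$, since $N_G(I)$ might use vertices of $L(G)$. I will read item 3 as Larson states it, namely inside $G[V(G)-L(G)]$, so that $N(I)=N_{L_G^c}(I)$. This reading is the one under which his decomposition gives the "2-bicritical" complementary part.

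If one prefers to justify the inequality inside $L_G^c$ directly, the argument runs by contradiction through the uniqueness and maximality of $L(G)$ in item 4. Suppose some nonempty independent $I\subseteq L^c(G)$ had $|N_{L_G^c}(I)|\le|I|$. Then $I$ would be a critical independent set of $L_G^c$. Adjoining it to a maximum critical independent set $J$ of $G$ should give a larger critical independent set of $G$: the union $J\cup I'$ for a suitable critical $I'\subseteq I$ is independent, because $N(J)\subseteq L(G)$ separates $J$ from $L^c(G)$. This would contradict the maximality built into item 4. Verifying that the union remains critical is the only nontrivial step, and it follows by a standard submodularity count of $|S|-|N(S)|$.

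With the inequality in hand, \cref{1928u3123} immediately gives that $L_G^c$ is 2-bicritical.
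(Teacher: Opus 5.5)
Your main argument is correct and is the paper's: item 3 of \cref{larsonthm}, with $N(I)$ read inside $L_G^c$ as you rightly insist, combined with the characterization in \cref{1928u3123}. One slip in your optional direct justification: $|N_{L_G^c}(I)|\le|I|$ does not make $I$ critical in $L_G^c$, and a critical $I'\subseteq I$ need not exist, but neither is needed, because $N_G(J\cup I)$ is the disjoint union of $N_G(J)\subseteq L(G)$ and $N_{L_G^c}(I)\subseteq L^c(G)$, so $J\cup I$ is already a critical independent set larger than $J$.
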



\begin{theorem}[\label{oij123ioj132ioj}\cite{kevincorecorona2biciclicbicritical}]
	Let $G$ be a $2$-bicritical graph with two odd cycles, then:
	\begin{itemize}
		\item If $G$ is connected and the two odd cycles share at most one vertex, then
		$\a{\core G}+\a{\corona G}=2\alpha(G)$,
		\item If both odd cycles share at least two vertices, then
		$\a{\core G}+\a{\corona G}=2\alpha(G)+1$,
		\item If $G$ is disconnected, then
		$\a{\core G}+\a{\corona G}=2\alpha(G)+2$.
	\end{itemize}
\end{theorem}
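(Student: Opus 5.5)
The plan is to treat the three bullets separately. All three rest on one common reduction: for a $2$-bicritical graph the core is empty.

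\textbf{Step 1: the core is empty.} By \cref{1928u3123}, no nonempty independent set $I$ satisfies $|N(I)|\le|I|$, so the critical independence number of $G$ is $0$. The known fact that $\core G$ is contained in every maximum critical independent set then gives $\core G=\emptyset$. Equivalently, for a vertex $v\in\core G$, the set $\{v\}$ (or $\core G$ itself) would satisfy $|N(\core G)|\le|\core G|$, contradicting \cref{1928u3123}. Each bullet therefore reduces to computing $|\corona G|$, that is, the number of vertices lying in some maximum independent set, and showing it equals $2\alpha(G)$, $2\alpha(G)+1$ or $2\alpha(G)+2$.

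\textbf{Step 2: the disconnected bullet.} Every component of a $2$-bicritical graph is again $2$-bicritical, since the condition of \cref{1928u3123} is local to components. A bipartite component would contain a nonempty independent set $S$ with $|N(S)|\le|S|$ (a colour class of minimum size), so every component contains an odd cycle. With exactly two odd cycles and $G$ disconnected, there are exactly two components $G_1,G_2$, each almost bipartite. Each is non-König--Egerváry, because a König--Egerváry graph has nonempty critical independent sets. The almost bipartite result \cite{levit2025almost} then gives $|\core{G_i}|+|\corona{G_i}|=2\alpha(G_i)+1$. Since core, corona and $\alpha$ are additive over components, summing yields $2\alpha(G)+2$.

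\textbf{Step 3: the connected bullets, by edge deletion.} The idea is to delete an edge $e=xy$ lying on one odd cycle $C_1$ but not on the other cycle $C_2$. Then $G-e$ has exactly one odd cycle, and its value of $|\core{\cdot}|+|\corona{\cdot}|$ is known from \cite{levit2011set} or \cite{levit2025almost}. I would compare $\Omega(G)$ with $\Omega(G-e)$.

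\begin{itemize}
\item If $\alpha(G-e)=\alpha(G)$, then $\Omega(G)$ consists of the members of $\Omega(G-e)$ not containing both $x$ and $y$.
\item Otherwise $\alpha(G-e)=\alpha(G)+1$, and every member of $\Omega(G-e)$ contains both $x$ and $y$.
\end{itemize}

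The main obstacle is exactly this bookkeeping: determining which vertices leave the corona when $e$ is added back. The position of $C_2$ relative to $C_1$ is what decides this count, and it is what produces $2\alpha$ when the cycles share at most one vertex versus $2\alpha+1$ when they share a path.

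For the shared-path case I would first pin down the structure. The union $C_1\cup C_2$ is a theta graph, and its third cycle is even. Since $2$-bicriticality forbids pendant vertices (take $S=\{v\}$) and adding further cycles would create new odd cycles, the remaining structure is severely restricted. I would then try to compute $\alpha$ and the corona directly via a near-perfect matching argument, giving $n=2\alpha+1$ with every vertex in the corona. The cases sharing at most one vertex would be handled analogously, aiming for $|\corona G|=2\alpha(G)$. Here I would first check carefully whether the hypotheses exclude configurations such as two triangles joined by a bridge: there $\alpha=2$ but all six vertices lie in the corona. This suggests that the precise hypotheses of \cite{kevincorecorona2biciclicbicritical} must be used at exactly this step.
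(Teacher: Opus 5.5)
\textbf{Step 1 fails, and Step 3 depends on it.} A $2$-bicritical graph with exactly two odd cycles can have a nonempty core. Join two triangles $a_1a_2a_3$ and $b_1b_2b_3$ by a path $a_1\,m\,b_1$. Every independent set $S$ meets each triangle at most once. If $m\notin S$, each vertex of $S$ contributes its two triangle-mates to $N(S)$. If $m\in S$, then $a_1,b_1\in N(S)$ and every other vertex of $S$ contributes one further triangle-mate. Either way $|N(S)|>|S|$, so the graph is $2$-bicritical. Yet $\alpha(G)=3$, and every maximum independent set is $m$ together with one of $a_2,a_3$ and one of $b_2,b_3$. Hence $\core{G}=\{m\}$, $|N(\core{G})|=2>|\core{G}|$, and $|\corona{G}|=5\neq 2\alpha(G)$, even though $|\core{G}|+|\corona{G}|=2\alpha(G)$ does hold here. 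The inclusion you quote, that the core lies in every maximum critical independent set, is not a known fact: here the only critical independent set is $\emptyset$. The known inclusion goes the other way (the intersection of all critical sets lies in the core), and $|N(\core{G})|\le|\core{G}|$ is a König--Egerváry phenomenon. So reducing every bullet to a count of $|\corona{G}|$ is invalid. Independently of that, Step 3 only names the edge-deletion bookkeeping without carrying it out. It also asserts, rather than proves, $n=2\alpha+1$ and $\corona{G}=V(G)$ in the theta case, and the claim that further cycles would create new odd cycles is not justified as stated. So the proposal contains no working argument for the first two bullets.

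\textbf{Your bridge example is a genuine counterexample to the first bullet.} Two triangles joined by a bridge $a_1b_1$ form a connected graph whose two odd cycles are disjoint. It is $2$-bicritical, since each vertex of an independent set contributes its two triangle-mates, giving $|N(S)|\ge 2|S|$. But $\alpha(G)=2$, and the maximum independent sets are all pairs $\{a_i,b_j\}$ with $(i,j)\neq(1,1)$. So $\core{G}=\emptyset$, $\corona{G}=V(G)$, and $|\core{G}|+|\corona{G}|=6=2\alpha(G)+2$. The first bullet as stated is therefore false, and no refinement of Step 3 can prove it; you should have concluded this rather than deferring to unstated hypotheses. Compare with the previous example: the same triangles joined by a path of length two give the value $2\alpha(G)$. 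Thus connectivity and the intersection pattern of the odd cycles do not by themselves determine the value. The two examples differ in $n-2\alpha(G)$ ($2$ versus $1$), which suggests that a correct dichotomy must involve this quantity.

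\textbf{Step 2 is sound and does not depend on Step 1.} Components of a $2$-bicritical graph are $2$-bicritical, hence non-bipartite. This yields exactly two almost bipartite, non-König--Egerváry components, each contributing $2\alpha(G_i)+1$. One correction: in a bipartite component, the colour class violating $|N(S)|>|S|$ is the larger one, not the smaller.
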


\begin{theorem}
	[\label{qoijhj12i3}\cite{levit2012critical}] A graph $G$ is a K\H{o}nig--Egerváry graph if and
	only if every $S\in\Omega(G)$ is critical.
\end{theorem}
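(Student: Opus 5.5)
The plan is to prove the two implications separately. For the forward direction I would use a matching-counting argument. For the converse I would use Larson's decomposition (\cref{larsonthm}). Recall that an independent set $S$ is \emph{critical} if $|S|-|N(S)|\ge |I|-|N(I)|$ for every independent set $I$ of $G$.

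For ($\Rightarrow$), let $G$ be König--Egerváry, let $M$ be a maximum matching, and let $S\in\Omega(G)$.

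\emph{Step 1: an upper bound for every independent set.} Let $I$ be any independent set. Each vertex of $I$ is either unmatched by $M$ or matched to a vertex outside $I$; that partner is a neighbour of $I$. The partner map is injective, and there are $n-2\mu(G)$ unmatched vertices. Hence
\[
|I|\le n-2\mu(G)+|N(I)|, \qquad\text{that is,}\qquad |I|-|N(I)|\le n-2\mu(G)=\alpha(G)-\mu(G),
\]
where the last equality uses $\alpha(G)+\mu(G)=n$.

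\emph{Step 2: $S$ attains the bound.} The set $V-S$ is a vertex cover of size $n-\alpha(G)=\mu(G)$. Each edge of $M$ has at most one end in $S$, because $S$ is independent, and at least one end in $V-S$, because $V-S$ is a cover. Since $|V-S|=|M|$, every vertex of $V-S$ is matched by $M$ to a vertex of $S$. Therefore $V-S\subseteq N(S)$, and as $N(S)\subseteq V-S$ trivially, we get $N(S)=V-S$. Consequently $|S|-|N(S)|=\alpha(G)-\mu(G)$, which is the maximum from Step 1, so $S$ is critical.

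For ($\Leftarrow$), assume every $S\in\Omega(G)$ is critical and fix one such $S$.

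\emph{Step 3: $S$ is a maximum critical independent set.} Every critical independent set has size at most $\alpha(G)$, and $S$ is critical of size $\alpha(G)$.

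\emph{Step 4: $L^{c}(G)$ is empty.} By item 4 of \cref{larsonthm}, $L(G)=S\cup N(S)$. Any vertex of $L^{c}(G)=V(G)-S-N(S)$ is neither in $S$ nor adjacent to $S$. Adding it to $S$ would give a larger independent set, contradicting $S\in\Omega(G)$. Hence $L^{c}(G)=\emptyset$.

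\emph{Step 5: conclusion.} Since $L(G)=V(G)$, we have $G=L_G$, which is König--Egerváry by item 2 of \cref{larsonthm}.

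The only point needing care is Step 2. There the identity $N(S)=V-S$ depends on $|V-S|=|M|$, so that $M$ saturates $V-S$ into $S$. The converse is essentially immediate once Larson's characterization $L(G)=J\cup N(J)$ is applied to a maximum independent set that is also critical.
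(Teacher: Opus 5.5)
Your proof is correct. The paper gives no argument for this statement of its own: it is quoted from \cite{levit2012critical} and used as a black box, for instance in \cref{i123ij1i2i2i}, so there is no in-paper proof to compare with. Your appeal to \cref{larsonthm} in the converse is legitimate, because Larson's decomposition is an external result that does not rest on this theorem or on anything the paper derives from it.

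\textbf{Forward direction.} Step 2 does more work than necessary. $N(S)=V(G)-S$ holds for every maximal independent set, which is the same observation you use in Step 4. So $|S|-|N(S)|=2\alpha(G)-n$ comes for free, and the forward direction reduces to the sandwich $2\alpha(G)-n\le\max_{X}\bigl(|X|-|N(X)|\bigr)\le n-2\mu(G)$. Its two ends coincide exactly when $\alpha(G)+\mu(G)=n$.

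As phrased, ``at most one end in $S$'' and ``at least one end in $V-S$'' are the same assertion; the count $|V-S|=|M|$ is what does the work.

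Step 1 also holds for arbitrary $X\subseteq V(G)$, since the $M$-partner of a matched vertex of $X$ always lies in $N(X)$. So your argument does not depend on whether criticality is defined over independent sets or over all subsets, as in \cite{zhang1990finding}.

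\textbf{Converse.} You only use one critical $S\in\Omega(G)$. You therefore prove the stronger fact that a single critical maximum independent set already forces the K\H{o}nig--Egerv\'ary property.

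If you want to avoid the heavy black box, the converse also has a short elementary proof. A critical independent set $S$ admits a matching from $N(S)$ into $S$. Indeed, if Hall's condition failed for some $A\subseteq N(S)$, then $S'=S-N(A)$ would satisfy $N(S')\subseteq N(S)-A$, hence $|S'|-|N(S')|>|S|-|N(S)|$, contradicting criticality. For a critical $S\in\Omega(G)$ this gives $\mu(G)\ge|N(S)|=n-\alpha(G)$. Since $\alpha(G)+\mu(G)\le n$ always holds, equality follows. This matching is the standard reason why $J\cup N(J)$ induces a K\H{o}nig--Egerv\'ary graph for a critical independent set $J$.
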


\begin{theorem}
	[\label{qoijhj12i3qoijhj12i3}Butenko \& Trukhanov\cite{butenko2007using}] If $I$ is a critical
independet set in a graph $G$ then there is a maximum independent
set $S$ in $G$ such that $I\s S$. 
\end{theorem}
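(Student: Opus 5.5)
This is the Butenko--Trukhanov theorem: if $I$ is a critical independent set of $G$, then some $S\in\Omega(G)$ contains $I$. Recall that $I$ is critical if $|I|-|N(I)|$ is maximum over all independent sets of $G$, equivalently over all subsets. I would argue by an exchange argument. Fix a critical independent set $I$ and choose $S\in\Omega(G)$ with $|S\cap I|$ as large as possible. Put $A=I\setminus S$ and suppose, for a contradiction, that $A\neq\emptyset$.

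Consider the candidate set
\[
S'=(S\setminus N(A))\cup A .
\]
First I check that $S'$ is independent. $A\subseteq I$ is independent. No vertex of $S\setminus N(A)$ is adjacent to $A$. Elements of $A$ are not in $S$, so the union is disjoint. Moreover $S'\supseteq (S\cap I)\cup A$: vertices of $S\cap I$ are not in $N(A)$, because $I$ is independent. Hence $|S'\cap I|>|S\cap I|$. It therefore suffices to show $|S'|\ge|S|$, i.e. $|A|\ge|S\cap N(A)|$; then $S'\in\Omega(G)$, contradicting the choice of $S$.

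The key step is this inequality, and it comes from criticality. Consider the independent set
\[
J=(I\setminus A)\cup (S\cap N(A)) = (I\cap S)\cup(S\cap N(A)).
\]
It is independent because it lies in $S$. Its neighborhood satisfies
\[
N(J)\subseteq \bigl(N(I)\setminus (S\cap N(A))\bigr)\cup N(S\cap N(A)).
\]
A neighbor of $S\cap N(A)$ is either in $A\cup\bigl(N(S\cap N(A))\setminus A\bigr)$. Neighbors of $I\cap S$ lie in $N(I)$ but not in $S$, so they avoid $S\cap N(A)$.

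The bookkeeping here is the main obstacle, and I would instead use the cleaner route via supermodularity. The function $X\mapsto |X|-|N(X)|$ satisfies
\[
|N(X\cup Y)|+|N(X\cap Y)|\le|N(X)|+|N(Y)| .
\]
Apply this with $X=I$ and $Y=S$. Then $X\cap Y=I\cap S$ and $X\cup Y=I\cup S$.

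If $I\cup S$ is not independent, replace it by the set $T=(S\setminus N(A))\cup A = S'$, and compare $I\cap S$ with $S$ directly. Criticality of $I$ gives $|I|-|N(I)|\ge |I\cap S|+|S\cap N(A)|-|N(J)|$, and estimating $N(J)\subseteq (N(I)\setminus A)\cup A$ yields $|A|\ge |S\cap N(A)|$. The sets $N(I)$ and $N(J)$ differ only by removing the vertices $S\cap N(A)$ and adding at most $A$.

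I expect carefully verifying this neighborhood containment to be the only delicate point. Once it is in place, the contradiction finishes the proof: $I\subseteq S$.
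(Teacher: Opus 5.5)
Your reduction is sound: $S'=(S\setminus N(A))\cup A$ is independent, contains $I$, and it suffices to prove $|A|\ge|S\cap N(A)|$. The gap is in that key inequality.

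\textbf{Where the argument fails.} You test criticality of $I$ against $J=(I\cap S)\cup(S\cap N(A))$, which needs $N(J)\subseteq N(I)\cup A$. Nothing controls the neighbors of $S\cap N(A)$; they are only neighbors of neighbors of $I$. Your justification never uses the extremal choice of $S$, so the containment would have to hold for every $S\in\Omega(G)$. It does not. In the path $a\,y\,z\,w$, the set $I=\{a\}$ is critical (the critical difference is $0$) and $S=\{y,w\}$ is maximum. Then $J=\{y\}$ and $N(J)=\{a,z\}\not\subseteq N(I)\cup A=\{a,y\}$. Here criticality of $I$ against $J$ only yields $0\ge -1$. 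The supermodularity paragraph is never carried to a conclusion, so it does not close the gap either.

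\textbf{The fix.} Test criticality against a subset of $I$ rather than a subset of $S$. Take $I\cap S$. It is independent and $N(I\cap S)\subseteq N(I)$. Since $S$ is independent, $N(I\cap S)\cap S=\emptyset$, so $N(I\cap S)\subseteq N(I)\setminus S$. For the same reason $S\cap N(I)=S\cap N(A)$. Criticality then gives
\[
|I|-|N(I)|\ \ge\ |I\cap S|-|N(I\cap S)|\ \ge\ |I\cap S|-|N(I)|+|S\cap N(A)|,
\]
that is, $|A|\ge|S\cap N(A)|$. With this, the extremal choice of $S$ and the contradiction become unnecessary. For any $S\in\Omega(G)$, the set $(S\setminus N(I))\cup I$ is independent, contains $I$, and has size $|S|-|S\cap N(I)|+|I\setminus S|\ge|S|$.

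The paper gives no proof of this statement; it only cites Butenko and Trukhanov.
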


\begin{lemma}\label{i123ij1i2i2i}
	Let $G$ be a K\H{o}nig--Egerváry graph. Then $S$
	is a maximum independent set if and only if $S$ is a maximum
	critical independent set.
\end{lemma}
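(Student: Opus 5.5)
We have to prove that in a König--Egerváry graph $G$, a set $S$ is a maximum independent set if and only if it is a maximum critical independent set. Recall that an independent set $I$ is \emph{critical} if $|I|-|N(I)|$ is maximum over all independent sets; a maximum critical independent set is a critical independent set of largest cardinality. The plan is to compare cardinalities, using \cref{qoijhj12i3} for one direction and \cref{qoijhj12i3qoijhj12i3} for the other.

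\emph{($\Rightarrow$).} Let $S\in\Omega(G)$. By \cref{qoijhj12i3}, since $G$ is König--Egerváry, $S$ is critical. Any critical independent set $J$ is in particular independent, so $|J|\le\alpha(G)=|S|$. Hence $S$ has the largest cardinality among critical independent sets, i.e.\ $S$ is a maximum critical independent set.

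\emph{($\Leftarrow$).} Let $S$ be a maximum critical independent set. By \cref{qoijhj12i3qoijhj12i3} there is some $T\in\Omega(G)$ with $S\subseteq T$. By \cref{qoijhj12i3} again, $T$ is critical, and therefore $|T|\le|S|$ by the maximality of $S$ among critical independent sets. Combined with $S\subseteq T$, this gives $S=T$, so $S\in\Omega(G)$.

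We do not expect any real obstacle: both directions are cardinality comparisons. The only point to state explicitly is that \cref{qoijhj12i3} makes every maximum independent set critical, so that maximum critical independent sets exist and have size exactly $\alpha(G)$. Equivalently, in a König--Egerváry graph the family of maximum critical independent sets coincides with $\Omega(G)$.
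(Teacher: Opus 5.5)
Your proof is correct and follows the same approach as the paper. The paper only says the lemma follows directly from \cref{qoijhj12i3} (in a K\H{o}nig--Egerv\'ary graph every maximum independent set is critical) and \cref{qoijhj12i3qoijhj12i3} (every critical independent set extends to a maximum one), and your two cardinality comparisons spell out that one-line argument.
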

\begin{proof}
	It follows directly from \cref{qoijhj12i3} and \cref{qoijhj12i3qoijhj12i3}.
\end{proof}

\begin{theorem}
	[\cite{jarden2019monotonic}\label{1230ij}] $\a{\corona G}+\a{\core G}=2\alpha(G)$
	if and only if $\a{\U\Gamma}+\a{\I\Gamma}=2\alpha(G)$ holds for each
	non-empty $\Gamma\s\Omega(G)$. 
\end{theorem}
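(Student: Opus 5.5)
The forward implication is immediate: take $\Gamma=\Omega(G)$, so that $\U\Gamma=\corona G$ and $\I\Gamma=\core G$. For the converse I will study the function
\[
f(\Gamma)=\a{\U\Gamma}+\a{\I\Gamma}-2\alpha(G),\qquad \emptyset\neq\Gamma\s\Omega(G).
\]
I will prove two facts. First, $f$ is \emph{monotone nondecreasing} under inclusion of families. Second, $f$ vanishes on singletons, which is clear since $f(\{S\})=2\a S-2\alpha(G)=0$. Together these give $0=f(\{S\})\le f(\Gamma)\le f(\Omega(G))$ for every nonempty $\Gamma$ and any $S\in\Gamma$. The hypothesis $\a{\core G}+\a{\corona G}=2\alpha(G)$ says exactly that $f(\Omega(G))=0$, so $f(\Gamma)=0$ for all such $\Gamma$, which is the claim.

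For monotonicity it suffices to add one set at a time. Fix a nonempty $\Gamma\s\Omega(G)$, let $A=\I\Gamma$ and $B=\U\Gamma$, and take $S\in\Omega(G)$. Passing from $\Gamma$ to $\Gamma\u\{S\}$ changes the union from $B$ to $B\u S$, a gain of $\a{S-B}$. It changes the intersection from $A$ to $A\ii S$, a loss of $\a{A-S}$. So I must show the key inequality $\a{S-B}\ge\a{A-S}$.

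This inequality is the main step, and I will prove it by the standard exchange argument for maximum independent sets. Put $X=A-S$. The set $X$ is independent, being contained in $A$, which is a subset of an independent set. The set $(S-N(X))\u X$ is then independent, and it is disjoint-union-like since $X\ii S=\emptyset$. Because $S$ is maximum,
\[
\a{S-N(X)}+\a X\le \a S,\qquad\text{hence}\qquad \a{N(X)\ii S}\ge\a X .
\]
It remains to place $N(X)\ii S$ inside $S-B$. Since $X\s A$, the set $X$ is contained in every $T\in\Gamma$. As each such $T$ is independent, $N(X)\ii T=\emptyset$ for every $T\in\Gamma$, and therefore $N(X)\ii B=\emptyset$. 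Consequently $N(X)\ii S\s S-B$, which gives $\a{S-B}\ge\a{N(X)\ii S}\ge\a{A-S}$. Hence $f(\Gamma\u\{S\})\ge f(\Gamma)$.

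The only subtle point is to observe that the neighbourhood of the common part $A$ avoids the whole union $B$, not merely the intersection. This observation is what makes the gain-versus-loss comparison work. Everything else is bookkeeping, and iterating the one-step monotonicity along any chain $\{S\}\s\Gamma\s\Omega(G)$ completes the argument.
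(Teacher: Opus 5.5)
Your proof is correct. The one-step inequality $|S-\bigcup\Gamma|\ge|\bigcap\Gamma-S|$ is sound: it follows from the exchange set $(S-N(X))\cup X$ together with $N(\bigcap\Gamma)\cap\bigcup\Gamma=\emptyset$. This is essentially the route of the cited source (Jarden, Levit and Mandrescu), whose main theorem is exactly the $\subseteq$-monotonicity of $\Gamma\mapsto|\bigcup\Gamma|+|\bigcap\Gamma|$; the present paper only quotes the result and gives no proof. There is one terminological slip: taking $\Gamma=\Omega(G)$ proves the reverse implication, not the forward one, but your argument covers both directions.
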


\begin{theorem}\label{0i1j230i123}
	If G is a König--Egerváry graph, then
	\begin{itemize}
		\item \textnormal{\cite{levit2003alpha+}} $\corona G\u N\left(\core G\right)=V(G).$
		\item \textnormal{\cite{levit2011set}} $\a{\core G} +\a{\corona G}=2\alpha(G).$
	\end{itemize}
\end{theorem}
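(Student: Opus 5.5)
Both parts follow from one self-contained argument built on a fixed maximum matching; the key is that every maximum independent set meets every matching edge in exactly one endpoint. Let $M$ be a maximum matching of the König--Egerváry graph $G$, so $\mu(G)=|M|=|V(G)|-\alpha(G)$.

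\textbf{Step 1: structure relative to $M$.} Fix any $S\in\Omega(G)$. Then $V(G)-S$ is a vertex cover of size $|V(G)|-\alpha(G)=\mu(G)$. Every edge of $M$ has an endpoint in $V(G)-S$, these endpoints are distinct, and $|M|=|V(G)-S|$. Hence each vertex of $V(G)-S$ is covered by exactly one edge of $M$, and that edge has exactly one endpoint in $V(G)-S$. Two consequences follow:
\begin{enumerate}
\item each edge $uv\in M$ has exactly one endpoint in $S$;
\item every $M$-unmatched vertex lies in $S$.
\end{enumerate}
Since $S\in\Omega(G)$ was arbitrary, every unmatched vertex belongs to $\core G$. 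Also, for each $uv\in M$, every maximum independent set contains exactly one of $u,v$.

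\textbf{Step 2: the partition $\corona G\cup N(\core G)=V(G)$.} Take $v\notin\corona G$. By Step 1, $v$ is matched, since unmatched vertices lie in the core and hence in the corona. As $v$ belongs to no $S\in\Omega(G)$, the vertex $M(v)$ belongs to every $S\in\Omega(G)$. Thus $M(v)\in\core G$, and $v\in N(\core G)$.

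\textbf{Step 3: the counting identity.} Split $V(G)$ into the $|V(G)|-2\mu(G)$ unmatched vertices and the $\mu(G)$ pairs $\{u,v\}\in M$.
\begin{itemize}
\item Each unmatched vertex lies in both $\core G$ and $\corona G$, so it contributes $2$.
\item Take a pair $\{u,v\}\in M$ and suppose $u\in\core G$. Then $v$ lies in no maximum independent set, so the pair contributes $1$ to the core and $1$ to the corona.
\item Suppose instead neither $u$ nor $v$ is in the core. Some $S\in\Omega(G)$ misses $u$, so that $S$ contains $v$; symmetrically some $S$ contains $u$. The pair then contributes $0$ to the core and $2$ to the corona.
\end{itemize}
In all cases each pair contributes exactly $2$. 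Therefore
\[
\a{\core G}+\a{\corona G}=2\bigl(|V(G)|-2\mu(G)\bigr)+2\mu(G)=2\bigl(|V(G)|-\mu(G)\bigr)=2\alpha(G).
\]

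The only delicate point is Step 1, namely extracting from $\alpha+\mu=n$ that the complement of every maximum independent set is matched into it by $M$. This is exactly where the König--Egerváry hypothesis enters; once Step 1 holds, Steps 2 and 3 are routine bookkeeping.
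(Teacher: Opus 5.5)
Your proof is correct, and every step checks out. Step~1 is cleanest written as $|M|\le\sum_{e\in M}|e\cap(V(G)-S)|\le|V(G)-S|=|M|$, which forces both equalities.

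The paper does not prove this theorem itself: both items are imported from Levit and Mandrescu \cite{levit2003alpha+,levit2011set}. So your argument replaces a citation rather than varying an in-paper proof. It is essentially the classical matching argument: a maximum matching $M$ of a König--Egerváry graph matches $V(G)-S$ into $S$ for every $S\in\Omega(G)$, and both items then reduce to bookkeeping over the edges of $M$ and the unmatched vertices.

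What your route buys is generality at no extra cost, because nothing in Steps 1--3 uses that the family is all of $\Omega(G)$. Take an arbitrary nonempty $\Gamma\subseteq\Omega(G)$ and replace $\core G$ and $\corona G$ by $\bigcap\Gamma$ and $\bigcup\Gamma$. The same lines then show that $M$ itself matches $V(G)-\bigcup\Gamma$ into $\bigcap\Gamma$, and that $|\bigcup\Gamma|+|\bigcap\Gamma|=2\alpha(G)$. These are precisely Lemma~\ref{asdouih12}, Theorem~\ref{asdouih129i3} and Corollary~\ref{saodi123}. The paper obtains them only by invoking two further external results: Theorem~\ref{1230ij} from \cite{jarden2019monotonic}, and \cite{kevincoronacore}. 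Your argument would let all of them be derived in one stroke.
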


From \cref{1230ij} and \cref{0i1j230i123}, the following follows directly.

\begin{corollary}\label{saodi123}
	If $G$ is a K\H{o}nig-Egerváry graph, then 
	\[
	\a{\U\Gamma}+\a{\I\Gamma}=2\alpha(G)
	\]
	\noindent holds for each non-empty $\Gamma\s\Omega(G)$.
\end{corollary}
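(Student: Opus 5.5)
The corollary follows by chaining the two cited results, and I would present it exactly that way. Let $G$ be a K\H{o}nig--Egerváry graph. First, by the second item of \cref{0i1j230i123} (the result of \cite{levit2011set}), we have
\[
\a{\core G}+\a{\corona G}=2\alpha(G).
\]
This is precisely the left-hand side of the equivalence in \cref{1230ij}, since $\a{\corona G}+\a{\core G}$ is the same quantity.

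Second, apply the forward implication of \cref{1230ij}. Because the equality holds for the full family $\Omega(G)$, the theorem gives
\[
\a{\U\Gamma}+\a{\I\Gamma}=2\alpha(G)
\]
for every non-empty $\Gamma\s\Omega(G)$, which is the assertion.

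The only care needed is to check that the hypotheses of \cref{1230ij} are unconditional. That theorem is stated for an arbitrary graph, with no extra assumptions. Moreover, $\Omega(G)$ is non-empty: the empty graph has the empty set as its unique maximum independent set, and any other graph has at least one maximum independent set. So nothing further has to be verified, and there is no real obstacle. The whole content lies in the cited theorems.

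If one preferred to avoid citing the forward direction of \cref{1230ij}, a direct route would be the following. Take $S,T\in\Omega(G)$. In a K\H{o}nig--Egerváry graph one has the standard inequality
\[
\a{S\u T}+\a{S\ii T}=2\alpha(G),
\]
and one would then argue by induction on $\a{\Gamma}$ using monotonicity. That is essentially the proof of \cite{jarden2019monotonic}, so I would simply invoke the cited result.
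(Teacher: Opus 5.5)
Your proof is correct and follows the paper's route exactly: the second item of \cref{0i1j230i123} gives the equality for $\Omega(G)$, and the forward direction of \cref{1230ij} extends it to every non-empty $\Gamma\s\Omega(G)$. Your optional aside is off, though you do not rely on it: $\a{S\u T}+\a{S\ii T}=2\alpha(G)$ holds for any two maximum independent sets in any graph, by inclusion--exclusion, so it carries no K\H{o}nig--Egerv\'ary information, and an induction built on it would not reach larger $\Gamma$.
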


The next result shows how the parameters $\core G$ and $\corona G$
behave under Larson’s decomposition, and allows us to reduce the study
of $|\core G|+|\corona G|$ to the $2$-bicritical component.

\begin{theorem}\label{89uih9uiy9iu}
	Let $G$ be a graph. Then
	\[
	\a{\core G}+\a{\corona G}
	=2\alpha(L_{G})+\a{\core{L_{G}^{c}}}+\a{\corona{L_{G}^{c}}}.
	\]
\end{theorem}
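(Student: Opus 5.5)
The plan is to show that maximum independent sets of $G$ are exactly the disjoint unions $S_1\cup S_2$ with $S_1\in\Omega(L_G)$ and $S_2\in\Omega(L_G^c)$, but restricted to a suitable family on the $L_G$ side. Then $\core G$ and $\corona G$ split along $L(G)\cup L^c(G)$, and we can invoke \cref{saodi123} on the König--Egerváry part.

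\textbf{Step 1.} Let $S\in\Omega(G)$. By item 1 of \cref{larsonthm}, $|S|=\alpha(L_G)+\alpha(L_G^c)$. Since $|S\cap L(G)|\le\alpha(L_G)$ and $|S\cap L^c(G)|\le\alpha(L_G^c)$, both traces are maximum: $S\cap L(G)\in\Omega(L_G)$ and $S\cap L^c(G)\in\Omega(L_G^c)$.

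\textbf{Step 2 (converse direction).} Define
\[
\Gamma=\{S\cap L(G):S\in\Omega(G)\}\subseteq\Omega(L_G).
\]
I claim every $T\in\Omega(L_G^c)$ combines with every $A\in\Gamma$. This is where the structure of $L(G)=J\cup N(J)$, with $J$ a maximum critical independent set (item 4), is used. If $A=J$, then $J\cup T$ is independent, because no vertex of $L^c(G)$ is adjacent to $J$: all neighbours of $J$ lie in $N(J)\subseteq L(G)$. Hence every $T\in\Omega(L_G^c)$ appears, and
\[
\core G\cap L^c(G)=\core{L_G^c},\qquad \corona G\cap L^c(G)=\corona{L_G^c}.
\]
To check that $J\in\Omega(L_G)$, note $|J|\ge\mu(L_G)$ because $J$ is matched into $N(J)$; alternatively use \cref{i123ij1i2i2i} applied to $L_G$, observing that $J$ is a maximum critical independent set of $L_G$ as well. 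More carefully: for any $S\in\Omega(G)$ and $T\in\Omega(L_G^c)$, the set $(S\cap L(G))\cup T$ might fail to be independent. That does not matter, because all we need is that $\{S\cap L^c(G)\}$ ranges over all of $\Omega(L_G^c)$, which the choice $A=J$ gives, while $\Gamma$ is nonempty.

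\textbf{Step 3.} Now
\[
\core G=\Big(\bigcap\Gamma\Big)\cup\core{L_G^c},\qquad \corona G=\Big(\bigcup\Gamma\Big)\cup\corona{L_G^c}.
\]
The first identity needs intersections to split. They do, since $\core G\cap L(G)$ is the intersection over $S$ of the traces $S\cap L(G)$, which is $\bigcap\Gamma$; the $L^c$ part was handled in Step 2. The union splits in the same way. Applying \cref{saodi123} to the König--Egerváry graph $L_G$ (item 2) with the nonempty family $\Gamma\subseteq\Omega(L_G)$ gives $|\bigcup\Gamma|+|\bigcap\Gamma|=2\alpha(L_G)$. Adding the $L^c$ contributions yields the formula.

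\textbf{Main obstacle.} The delicate point is Step 2: showing that $J$ is maximum in $L_G$ and that $J\cup T$ is independent in $G$. Independence holds because $N_G(J)\subseteq L(G)$. Maximality follows from item 1 together with the fact that a critical independent set extends to a maximum one (\cref{qoijhj12i3qoijhj12i3}), so that $|J|=\alpha(L_G)$. If $L^c(G)=\emptyset$, the statement reduces to \cref{0i1j230i123}.
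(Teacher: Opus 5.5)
Your proof is correct and follows the paper's own route. You use additivity of $\alpha$ to see that every $S\in\Omega(G)$ has maximum traces on $L(G)$ and $L^c(G)$. You use a maximum critical independent set $J$ with $|J|=\alpha(L_G)$ and $N(J)\subseteq L(G)$ to show that every member of $\Omega(L_G^c)$ occurs as a trace. Finally you apply \cref{saodi123} to $\Gamma=\{S\cap L(G):S\in\Omega(G)\}$, and you are more explicit than the paper about this choice of $\Gamma$ and about why the traces lie in $\Omega(L_G)$.

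One small correction. Your first justification of $|J|=\alpha(L_G)$ has the matching backwards: criticality gives a matching of $N(J)$ into $J$, not of $J$ into $N(J)$. Your closing Butenko--Trukhanov argument is sound, though, since any $S\in\Omega(G)$ containing $J$ avoids $N(J)$, so $S\cap L(G)=J$, and Step 1 then gives $|J|=\alpha(L_G)$.
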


\begin{proof}
	Let $I$ be a maximum critical independent set. Then, by \cref{larsonthm},
	$L(G)=I\u N(I)$. From \cref{larsonthm}, we have
	$\alpha(G)=\alpha(L_{G})+\alpha(L_{G}^{c})$, and $L_{G}$ is a
	K\H{o}nig--Egerváry graph. Hence, by \cref{i123ij1i2i2i}, there exists
	$T\in\Omega(G)$ such that $T\ii L(V)=I$, where $\a I=\alpha(L_{G})$.
	Then $T\ii L^{c}(G)$ can be any set in $\Omega(L_{G}^{c})$, that is,
	
	\begin{align*}
		\corona G & =\U_{S\in\Omega(G)}S\\
		& =\left(L(G)\ii\U_{S\in\Omega(G)}S\right)
		\u\left(L^{c}(G)\ii\U_{S\in\Omega(G)}S\right)\\
		& =\left(L(G)\ii\U_{S\in\Omega(G)}S\right)\u\corona{L_{G}^{c}}.
	\end{align*}
	
	\noindent Similarly, we show that
	\begin{align*}
		\core G & =\left(L(G)\ii\I_{S\in\Omega(G)}S\right)\u\core{L_{G}^{c}}.
	\end{align*}
	
	\noindent Therefore,
	
	\begin{eqnarray*}
		\a{\core G}+\a{\corona G} & = &
		\a{L(G)\ii\U_{S\in\Omega(G)}S}
		+\a{L(G)\ii\I_{S\in\Omega(G)}S}\\
		&  & +\a{\corona{L_{G}^{c}}}+\a{\core{L_{G}^{c}}}.
	\end{eqnarray*}
	
	\noindent But by \cref{larsonthm}, $L_{G}$ is a K\H{o}nig--Egerváry graph.
	Therefore, by \cref{saodi123},
	\[
	\a{L(G)\ii\U_{S\in\Omega(G)}S}
	+\a{L(G)\ii\I_{S\in\Omega(G)}S}
	=2\alpha(L_{G}).
	\]
	
	\noindent Thus, the desired equality follows.
\end{proof}

\begin{corollary}\label{asoij1i2j03}
	Let $G$ be a graph and let $k$ be a nonnegative integer
	such that $\a{\core{L_{G}^{c}}}+\a{\corona{L_{G}^{c}}}=2\alpha(L_{G}^{c})+k$.
	Then $\a{\core G}+\a{\corona G}=2\alpha(G)+k$.
\end{corollary}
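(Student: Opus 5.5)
This is a direct substitution into \cref{89uih9uiy9iu}, combined with the additivity of the independence number from \cref{larsonthm}.

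First, I apply \cref{89uih9uiy9iu} to $G$, which gives
\[
\a{\core G}+\a{\corona G}=2\alpha(L_{G})+\a{\core{L_{G}^{c}}}+\a{\corona{L_{G}^{c}}}.
\]
Next, I replace the last two terms using the hypothesis $\a{\core{L_{G}^{c}}}+\a{\corona{L_{G}^{c}}}=2\alpha(L_{G}^{c})+k$. The right-hand side then becomes $2\left(\alpha(L_{G})+\alpha(L_{G}^{c})\right)+k$.

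Finally, I invoke item~1 of \cref{larsonthm}, namely $\alpha(G)=\alpha(G[L(G)])+\alpha(G[V(G)-L(G)])=\alpha(L_{G})+\alpha(L_{G}^{c})$. This collapses the expression to $2\alpha(G)+k$, as claimed.

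The only point needing care is the degenerate case $L^{c}(G)=\emptyset$. There $L_{G}^{c}$ is the empty graph, so $\alpha(L_{G}^{c})=0$. With the natural convention that the empty graph has the empty set as its unique maximum independent set, both its core and corona are empty, which forces $k=0$. In that case $G=L_{G}$ is K\H{o}nig--Egerváry, and the conclusion agrees with \cref{0i1j230i123}. The case $L(G)=\emptyset$ is covered by the same computation, since the formula of \cref{89uih9uiy9iu} still applies with $\alpha(L_G)=0$.

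I expect no real obstacle beyond this bookkeeping; the substance already lies in \cref{89uih9uiy9iu}.
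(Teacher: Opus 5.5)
Your argument is correct and is essentially the paper's own proof: substitute the hypothesis into \cref{89uih9uiy9iu}, then collapse $2\alpha(L_{G})+2\alpha(L_{G}^{c})+k$ to $2\alpha(G)+k$ using item~1 of \cref{larsonthm}; you even state the final constant correctly, where the paper's write-up slips to ``$\alpha(G)+k$''. Your treatment of the degenerate case $L^{c}(G)=\emptyset$ is extra bookkeeping that the paper omits, and it does no harm.
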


\begin{proof}
	Note that by \cref{89uih9uiy9iu}
	\begin{align*}
		\a{\core G}+\a{\corona G} & =2\alpha(L_{G})+\a{\core{L_{G}^{c}}}+\a{\corona{L_{G}^{c}}}\\
		& =2\alpha(L_{G})+2\alpha(L_{G}^{c})+k.
	\end{align*}
	Then, by the additivity of $\alpha$ in Larson’s decomposition,
	the above reduces to $\alpha(G)+k$, see \cref{larsonthm}.
\end{proof}

Thus, from \cref{oij123ioj132ioj} and \cref{asoij1i2j03}, we obtain a complete classification of
the possible values of $|\core G|+|\corona G|$ for graphs with at most
two odd cycles.

\begin{theorem}
	Let $G$ be a graph such that $L_{G}^{c}$ has at most two odd cycles.
	Then
	\begin{itemize}
		\item If $L^{c}(G)=\emptyset$, or if $L_{G}^{c}$ is connected and the two odd cycles share at most one vertex, then
		$\a{\core G}+\a{\corona G}=2\alpha(G)$,
		\item If $L_{G}^{c}$ has a unique odd cycle, or  both odd cycles share at least two vertices, then
		$\a{\core G}+\a{\corona G}=2\alpha(G)+1$,
		\item If $L_{G}^{c}$ has two vertex-disjoint odd cycles and is disconnected,
		then $\a{\core G}+\a{\corona G}=2\alpha(G)+2$.
	\end{itemize}
\end{theorem}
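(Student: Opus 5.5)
The plan is to reduce everything to the $2$-bicritical part $L_G^c$ via \cref{asoij1i2j03}, and then read off the constant $k$ from known results on $L_G^c$. It therefore suffices to determine $k$ with
\[
\a{\core{L_{G}^{c}}}+\a{\corona{L_{G}^{c}}}=2\alpha(L_{G}^{c})+k
\]
in each of the listed situations. Whenever $L^c(G)\neq\emptyset$, $L_G^c$ is $2$-bicritical by the Observation following \cref{larsonthm}.

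\emph{Case $L^c(G)=\emptyset$.} Here $L_G=G$, so $G$ is König--Egerváry by \cref{larsonthm}. The value $2\alpha(G)$ then follows from \cref{0i1j230i123}, or directly from \cref{89uih9uiy9iu} with empty core and corona of the null graph.

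\emph{Case $L_G^c$ nonempty.} First I note that $L_G^c$ must contain an odd cycle. A nonempty bipartite graph is König--Egerváry, hence it has a maximum independent set $S$ with $|N(S)|\le|S|$. This contradicts the $2$-bicritical condition of \cref{1928u3123}, since every nonempty independent set must have strictly more neighbours than elements. So $L_G^c$ has exactly one or exactly two odd cycles.

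\emph{Two odd cycles.} Here \cref{oij123ioj132ioj} gives the three subcases directly:
\begin{itemize}
\item connected with cycles sharing at most one vertex gives $k=0$;
\item cycles sharing at least two vertices gives $k=1$;
\item disconnected gives $k=2$.
\end{itemize}
For the last item I must check that the third bullet of the statement matches the third bullet of \cref{oij123ioj132ioj}. A disconnected $2$-bicritical graph has every component $2$-bicritical, so each component needs an odd cycle by the argument above. Hence the two odd cycles lie in different components and are vertex-disjoint. Conversely, the statement's third bullet assumes disconnectedness explicitly, so the cases match.

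\emph{Unique odd cycle.} Here $L_G^c$ is almost bipartite. It is also non-König--Egerváry, since by \cref{larsonthm} and \cref{1928u3123} a nonempty $2$-bicritical graph cannot be König--Egerváry (its maximum independent sets would be critical by \cref{qoijhj12i3}). The result of \cite{levit2025almost} quoted in the introduction then gives $k=1$. If one wishes to avoid that citation, I would argue that $L_G^c$ is connected: each component is $2$-bicritical and needs its own odd cycle. I would then show $\core{L_G^c}=\emptyset$ and $\a{\corona{L_G^c}}=2\alpha+1$ using the odd cycle structure. I expect this to be the main obstacle if done from scratch, so I would rely on the cited theorem.

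Finally, \cref{asoij1i2j03} transfers each value of $k$ to $G$, which completes the classification.
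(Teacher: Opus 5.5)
Your argument is the paper's argument. You reduce to $L_G^c$ via \cref{asoij1i2j03}, dispose of $L^c(G)=\emptyset$ by \cref{0i1j230i123}, and read off $k$ from \cref{oij123ioj132ioj} (two odd cycles) and from the almost bipartite non-K\"onig--Egerv\'ary result of \cite{levit2025almost} (one odd cycle). Your version is more complete than the paper's one-line justification, which names only \cref{oij123ioj132ioj} and \cref{asoij1i2j03}. The checks you add are all needed and correct:
\begin{itemize}
\item a nonempty $2$-bicritical graph must contain an odd cycle;
\item such a graph is not K\"onig--Egerv\'ary;
\item the unique-cycle case requires \cite{levit2025almost}.
\end{itemize}

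There is, however, a genuine gap, and you share it with the paper. It is the step ``connected with cycles sharing at most one vertex gives $k=0$'' in the subcase where the two odd cycles are vertex-disjoint. There the first bullet of \cref{oij123ioj132ioj}, as quoted, is false, and so is the first bullet of the statement.

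Let $G$ be two triangles $abc$ and $def$ joined by the edge $cd$. Every vertex has degree at least $2$, every independent pair has exactly $4$ neighbours, and there is no independent triple. So $G$ is $2$-bicritical by \cref{1928u3123}, $L(G)=\emptyset$, and $L_G^c=G$ is connected with exactly two odd cycles sharing no vertex. Yet $\alpha(G)=2$. The eight pairs taking one vertex from each triangle, other than $\{c,d\}$, are maximum independent sets covering $V(G)$. Hence $\core G=\emptyset$, $\corona G=V(G)$, and $\a{\core G}+\a{\corona G}=6=2\alpha(G)+2$, not $2\alpha(G)$.

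Now replace the edge $cd$ by a path $c\,y\,d$ through a new vertex $y$. This is again connected and $2$-bicritical with vertex-disjoint odd cycles. Now $\alpha(G)=3$, every maximum independent set contains $y$, $\core G=\{y\}$ and $\corona G=\{a,b,y,e,f\}$, so the sum is $2\alpha(G)$. Thus in the connected vertex-disjoint case the value is not even determined by the hypotheses of the statement.

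No argument can establish the first bullet as written. The connected vertex-disjoint case must be removed from it and analysed separately; examples with exactly one shared vertex, such as the bowtie, do give $2\alpha(G)$.
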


\begin{corollary}
	Let $G$ be a graph with at most two odd cycles.
	Then
	\[
	2\alpha(G)\Le\a{\core G}+\a{\corona G}\le2\alpha(G)+2.
	\]
\end{corollary}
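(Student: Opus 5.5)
The plan is to reduce everything to the $2$-bicritical part $L_{G}^{c}$ of Larson's decomposition and then read off the bounds from the preceding theorem. First, since $L_{G}^{c}=G[L^{c}(G)]$ is an induced subgraph of $G$, every cycle of $L_{G}^{c}$ is a cycle of $G$. So if $G$ has at most two odd cycles, then $L_{G}^{c}$ has at most two odd cycles, and the hypothesis of the preceding theorem holds.

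Next we split into cases according to the number of odd cycles in $L_{G}^{c}$.

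\emph{No odd cycles.} If $L^{c}(G)=\emptyset$, the first item of the theorem gives the value $2\alpha(G)$. If $L^{c}(G)\neq\emptyset$ and $L_{G}^{c}$ has no odd cycle, then $L_{G}^{c}$ is bipartite and hence König--Egerváry. By the Observation, it is also $2$-bicritical, so $|N(S)|>|S|$ for every nonempty independent $S$. This is incompatible with being König--Egerváry. Indeed, by \cref{qoijhj12i3} every maximum independent set $S$ of $L_{G}^{c}$ is critical, and a nonempty critical independent set satisfies $|S|-|N(S)|\ge 0$, since a critical set maximizes $|I|-|N(I)|$ over independent sets and $I=\emptyset$ gives $0$. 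Hence this case cannot occur.

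\emph{One or two odd cycles.} If $L_{G}^{c}$ has exactly one odd cycle, or exactly two, the theorem assigns one of the values $2\alpha(G)$, $2\alpha(G)+1$ or $2\alpha(G)+2$.

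The main obstacle is checking that the three items of the preceding theorem really exhaust every configuration with two odd cycles. The connected case with cycles sharing at most one vertex gives $2\alpha(G)$. Cycles sharing at least two vertices give $2\alpha(G)+1$. A disconnected $L_{G}^{c}$ needs care. If the two odd cycles lie in different components, the third item applies. If they lie in the same component while some other component is bipartite, that bipartite component would itself be a nonempty $2$-bicritical König--Egerváry graph, which is impossible by the argument above. So a disconnected $L_{G}^{c}$ must have its two odd cycles in different components, and all configurations are covered.

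As a fallback that avoids this case bookkeeping, the lower bound also follows directly. By \cref{89uih9uiy9iu} it suffices to show $|\core H|+|\corona H|\ge 2\alpha(H)$ for $H=L_{G}^{c}$. This is the general inequality $|\core H|+|\corona H|\ge 2\alpha(H)$, obtained from \cref{1230ij}-type counting: take $S_1,S_2\in\Omega(H)$ and use $|S_1\cup S_2|+|S_1\cap S_2|=2\alpha(H)$ together with monotonicity. The upper bound is where the at-most-two-odd-cycles structure is genuinely used, via \cref{oij123ioj132ioj} and \cref{asoij1i2j03} with $k\le 2$.
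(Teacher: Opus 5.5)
Your main argument matches the paper, where the corollary is read off directly from the preceding classification theorem applied to $L_G^c$. Your extra checks supply details the paper leaves implicit: $L_G^c$ inherits at most two odd cycles, and no nonempty $2$-bicritical graph or component can be bipartite, so the three items are exhaustive.

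Your fallback lower bound is valid for every graph, but its real content is the monotonicity step you only gesture at. For $T\in\Omega(G)$ and $X=\bigcap\Gamma-T$, maximality of $T$ applied to the independent set $(T-N(X))\cup X$ gives $|T-\bigcup\Gamma|\ge|N(X)\cap T|\ge|X|$. This is exactly why adding $T$ to $\Gamma$ cannot decrease $|\bigcup\Gamma|+|\bigcap\Gamma|$.
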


\section{The Core–Corona Partition}\label{sss3}

Note that, for every graph $G$, the sets $\corona G$ and $N(\core G)$
are disjoint. Indeed, if $x \in N(\core G$, then $x$ is adjacent
to some vertex that belongs to every maximum independent set and,
consequently, $x$ cannot belong to any maximum independent set.

It is therefore of particular interest to study those graphs $G$ such that
\[
\corona G\cup N(\core G) = V(G),
\]
that is, those for which these sets induce a partition of $V(G)$.
This property imposes a strong structure on the graph.

It is known that this equality holds for Kőnig--Egerváry graphs
\cite{levit2003alpha+} and for almost-bipartite non--Kőnig--Egerváry graphs
\cite{levit2025almost}. In \cref{asoikjdo123asoikjdo123}, we provide a reductive
characterization of the class of graphs satisfying this property.
Moreover, in \cref{oi123ji12i}, we prove that every graph with at most two odd
cycles admits this partition of $V(G)$.

\begin{lemma}[\cite{kevincoronacore}\label{asdouih12}]
	Let $G$ be a K\H{o}nig--Egerváry graph and let $\emptyset\neq\Gamma\subseteq\Omega(G)$.
	Then there exists a matching from $V(G)-\U\Gamma$ into $\I\Gamma$.
\end{lemma}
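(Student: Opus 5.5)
Since the statement is a König--Egerváry matching result, I will reduce it to Hall's condition in a suitable bipartite graph and check that condition using the identity of \cref{saodi123}. Write $U=\U\Gamma$ and $C=\I\Gamma$. By \cref{saodi123}, $\a U+\a C=2\alpha(G)$. Since $G$ is König--Egerváry, $\a{V(G)}=\alpha(G)+\mu(G)$, and so
\[
\a{V(G)-U}=\a{V(G)}-\a U=\mu(G)-\alpha(G)+\a C .
\]
Also $C$ is independent and $V(G)-U$ is disjoint from $C$. Every vertex of $V(G)-U$ lies outside any fixed $S\in\Gamma$, so the goal is to match $V(G)-U$ into $C$ along edges of $G$.

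Fix $S\in\Gamma$. Because $G$ is König--Egerváry, there is a maximum matching $M$ with $\a M=\a{V(G)-S}$ that matches $V(G)-S$ into $S$. This is standard: $\mu(G)=n-\alpha(G)$, and every edge of $M$ must have an endpoint in the vertex cover $V(G)-S$, so each edge has exactly one endpoint there. In particular $M$ matches $V(G)-U\subseteq V(G)-S$ into $S$. The task is then to show that the partners can be chosen inside $C=\I\Gamma$.

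I would argue by Hall's theorem in the bipartite graph $H$ with parts $A=V(G)-U$ and $C$, keeping only the edges of $G$ between them. Suppose some $X\subseteq A$ has $\a{N(X)\ii C}<\a X$. Then I would build a new independent set from members of $\Gamma$ and get a contradiction. Take $S\in\Gamma$ and form $S'=(S-(N(X)\ii C))\u X$. The set $X$ is not independent in general, so instead I would take a maximal independent $X_0\subseteq X$ that still violates Hall. Such an $X_0$ exists by the deficiency version of Hall applied within the König--Egerváry structure, which is the delicate part. Independence of $S'$ also requires $N(X_0)\ii S\subseteq C$, which fails in general.

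A cleaner route, which I would try first, is to apply \cref{saodi123} to the subgraph side: use the equality for $\Gamma$ and for $\Gamma\u\{S\}$-type modifications, together with critical-set theory. Since every $S\in\Omega(G)$ is critical (\cref{qoijhj12i3}), $S$ satisfies $\a S-\a{N(S)}=d(G)$ maximal. Then $C$ is an intersection of critical independent sets of maximum difference, and $U$ is their union. By supermodularity of $\a{Y}-\a{N(Y)}$ on independent sets (the Zhang / Larson lattice argument), $C$ is itself critical. Moreover, $\a C-\a{N(C)}=d(G)$, because both $C$ and the union identity are tight. A critical independent set $C$ with this property admits a matching from $N(C)$ into $C$ (Larson). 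I would then check that $N(C)\supseteq V(G)-U$, using the count $\a{V(G)-U}=\mu-\alpha+\a C=\a C-d(G)=\a{N(C)}$, where $d(G)=\alpha-\mu$ for König--Egerváry graphs. Together with $V(G)-U\subseteq N(C)$, which I would prove as the hardest step, this gives $N(C)=V(G)-U$. The matching from $N(C)$ into $C$ is then the required one.

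The main obstacle is the inclusion $V(G)-U\subseteq N(C)$. The natural argument is a contradiction: if $v\notin U\u N(C)$, then adding $v$ to $C$ keeps it independent. Exchanging along the matching of the previous step inside some $S\in\Gamma$ should then produce a maximum independent set containing $v$, contradicting $v\notin U$. This needs care, since such a set need not lie in $\Gamma$. One way around this is to prove the inclusion $N(C)\subseteq V(G)-U$ directly, which is easy because no vertex of $U$ is adjacent to $C$. The cardinality equality then forces equality of the two sets, so this trouble disappears provided $\a{N(C)}\ge\a{V(G)-U}$. That lower bound, equivalently the criticality of $C$ with full difference $d(G)$, is the key step to verify.
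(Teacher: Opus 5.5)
Your final argument is correct once you discard the abandoned Hall attempt. The paper itself gives no proof of this lemma: it imports it from \cite{kevincoronacore}, noting that the case $\Gamma=\Omega(G)$ is in \cite{jarden2019monotonic}, and then derives \cref{asdouih129i3} from it. So your route is an independent one, and it runs in the opposite direction. With $U=\bigcup\Gamma$ and $C=\bigcap\Gamma$, the steps are:
\begin{itemize}
\item $N(C)\subseteq V(G)-U$ follows from the independence of the members of $\Gamma$;
\item $|N(C)|=|C|-(\alpha(G)-\mu(G))=|V(G)-U|$ follows from the criticality of $C$ together with \cref{saodi123};
\item hence $N(C)=V(G)-U$, which is \cref{asdouih129i3} in sharpened form;
\item the matching then comes from Larson's lemma that every critical independent set has a matching from its neighborhood into itself.
\end{itemize}
This gives a short proof whose only deep input is the identity of \cref{saodi123}. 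The paper obtains that corollary from \cref{1230ij} and \cref{0i1j230i123}, independently of \cref{asdouih12}, so nothing is circular.

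In the write-up, drop the ``hardest step'' discussion. The inclusion $V(G)-U\subseteq N(C)$ is already settled by the counting. The bound you flag in your last sentence as still to be verified is exactly the criticality of $C$, which your supermodularity step already delivers.

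State that step carefully. The function $f(X)=|X|-|N(X)|$ is supermodular on \emph{all} subsets of $V(G)$, because $|N(A\cup B)|=|N(A)\cup N(B)|$ and $|N(A\cap B)|\le|N(A)\cap N(B)|$. Restricting to independent sets would not suffice, since $U$ is usually not independent. ``Critical'' in \cref{qoijhj12i3} means maximizing $f$ over all $X\subseteq V(G)$. So for critical $A,B$, both $A\cup B$ and $A\cap B$ are critical, and induction over $\Gamma$ makes $C$ critical.

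Also record why $d_c(G)=\alpha(G)-\mu(G)$: for $S\in\Omega(G)$, maximality gives $N(S)=V(G)-S$, so $f(S)=2\alpha(G)-|V(G)|$. Finally, Larson's matching lemma is a two-line Hall argument. If some $Y\subseteq N(C)$ had $|N(Y)\cap C|<|Y|$, then $N(C-N(Y))\subseteq N(C)-Y$, so $f(C-N(Y))>f(C)$, contradicting criticality.
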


In \cite{jarden2019monotonic} it is shown that \cref{asdouih12} holds when
$\Gamma=\Omega(G)$. From \cref{asdouih12} the following result follows
immediately.

\begin{theorem}[\cite{kevincoronacore}\label{asdouih129i3}]
	Let $G$ be a K\H{o}nig--Egerváry graph and let $\emptyset\neq\Gamma\subseteq\Omega(G)$.
	Then
	\[
	\U\Gamma\cup N\left(\I\Gamma\right)=V(G).
	\]
\end{theorem}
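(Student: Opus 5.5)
The statement follows from \cref{asdouih12} applied to the given $\Gamma$. We already know that $\U\Gamma$ and $N(\I\Gamma)$ cover disjoint regions; only the covering needs proof. Fix $\emptyset\neq\Gamma\subseteq\Omega(G)$ and take any vertex $v\in V(G)$. We may assume $v\notin\U\Gamma$, and we must show $v\in N(\I\Gamma)$.

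By \cref{asdouih12}, there is a matching $M$ from $V(G)-\U\Gamma$ into $\I\Gamma$. By the convention for matchings fixed in \cref{sss1}, this means $M(V(G)-\U\Gamma)\subseteq\I\Gamma$.

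Since $v\in V(G)-\U\Gamma$, we get $M(v)\in\I\Gamma$. We also have $M(v)\neq v$. Indeed, $v\notin\U\Gamma\supseteq\I\Gamma$ (recall $\Gamma\neq\emptyset$), so $v$ cannot lie in $\I\Gamma$, and hence $M(v)$, which does lie there, is a different vertex.

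Therefore $vM(v)\in M\subseteq E(G)$, with $M(v)\in\I\Gamma$. This gives $v\in N(\I\Gamma)$. Since $v$ was an arbitrary vertex outside $\U\Gamma$, we conclude $\U\Gamma\cup N(\I\Gamma)=V(G)$.

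There is essentially no obstacle; all the weight is carried by \cref{asdouih12}. The only point needing care is the fixed-point issue: the definition $M(v)=v$ for unmatched $v$ could in principle satisfy $M(v)\in\I\Gamma$ trivially. The inclusion $\I\Gamma\subseteq\U\Gamma$ excludes this, and that is the one place where the hypothesis $\Gamma\neq\emptyset$ is used.
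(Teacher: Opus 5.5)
Your proof is correct and follows the paper's route: the paper simply says the theorem follows immediately from \cref{asdouih12}, and you have written out that deduction. Your check that $v\notin\U\Gamma\supseteq\I\Gamma$ forces $M(v)\neq v$, so that $v$ is really adjacent to $M(v)\in\I\Gamma$, is right and is the one detail worth stating.
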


\begin{lemma}\label{lasldllas}
	For every graph $G$, it holds that $\core G\ii L^{c}(G)=\core{L_{G}^{c}}$.
	Moreover, $\corona G\ii L^{c}(G)=\corona{L_{G}^{c}}$.
\end{lemma}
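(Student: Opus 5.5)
The plan is to show that the maximum independent sets of $G$ restrict on $L^{c}(G)$ to \emph{exactly} the family $\Omega(L_{G}^{c})$. Both equalities in the lemma then follow by intersecting, respectively uniting, over this family. The key fact is that every $S\in\Omega(G)$ splits according to Larson's decomposition.

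First, let $S\in\Omega(G)$. Then $S\ii L(G)$ is independent in $L_{G}$ and $S\ii L^{c}(G)$ is independent in $L_{G}^{c}$. Hence
\[
\alpha(G)=\a{S}=\a{S\ii L(G)}+\a{S\ii L^{c}(G)}\le\alpha(L_{G})+\alpha(L_{G}^{c})=\alpha(G),
\]
using item 1 of \cref{larsonthm}. Equality must hold throughout, so $S\ii L^{c}(G)\in\Omega(L_{G}^{c})$, and also $S\ii L(G)\in\Omega(L_{G})$.

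Conversely, let $T\in\Omega(L_{G}^{c})$. I would take a maximum critical independent set $I$ of $G$, so that $L(G)=I\u N(I)$ by item 4 of \cref{larsonthm}. As in the proof of \cref{89uih9uiy9iu}, I claim $\a{I}=\alpha(L_{G})$. Indeed, $I$ is independent in the König--Egerváry graph $L_{G}$, and by \cref{qoijhj12i3qoijhj12i3} it extends to a maximum independent set. The cleanest route, though, is to argue directly that $I\u T$ is independent and has size $\alpha(G)$.

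Independence holds because every neighbour of $I$ lies in $N(I)\s L(G)$, while $T\s L^{c}(G)$; so no vertex of $T$ is adjacent to $I$. For the size, I would use the standard fact that a maximum critical independent set $I$ satisfies $\a{I}=\alpha(G[I\u N(I)])$. This holds because $G[I\u N(I)]$ has a matching of $N(I)$ into $I$ (a Hall-type consequence of criticality), which bounds every independent set of $L_{G}$ by $\a{I}$. Then $\a{I\u T}=\alpha(L_{G})+\alpha(L_{G}^{c})=\alpha(G)$, so $I\u T\in\Omega(G)$ and its trace on $L^{c}(G)$ is $T$.

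Combining the two directions, $\{S\ii L^{c}(G):S\in\Omega(G)\}=\Omega(L_{G}^{c})$. Therefore
\[
\core G\ii L^{c}(G)=\I_{S\in\Omega(G)}\left(S\ii L^{c}(G)\right)=\I_{T\in\Omega(L_{G}^{c})}T=\core{L_{G}^{c}},
\]
and the same computation with unions gives $\corona G\ii L^{c}(G)=\corona{L_{G}^{c}}$.

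The main obstacle is justifying $\a{I}=\alpha(L_{G})$. If one wants to rely only on results stated in the paper, this step should instead be taken from the proof of \cref{89uih9uiy9iu}, which asserts the same fact (there exists $T\in\Omega(G)$ with $T\ii L(G)=I$ and $\a I=\alpha(L_{G})$). Everything else is routine.
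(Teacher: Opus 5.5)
Your proposal is correct and follows essentially the same route as the paper. The paper obtains the lemma by intersecting with $L^{c}(G)$ the decompositions of $\core G$ and $\corona G$ from the proof of \cref{89uih9uiy9iu}, and those decompositions rest on exactly your fact $\{S\ii L^{c}(G):S\in\Omega(G)\}=\Omega(L_{G}^{c})$; your additivity argument and your Hall-matching proof that $\a{I}=\alpha(L_{G})$, giving $I\u T\in\Omega(G)$, make explicit what the paper leaves implicit.
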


\begin{proof}
	Recalling the equalities obtained in the proof of \cref{89uih9uiy9iu},
	\begin{align*}
		\corona G & =\left(L(G)\ii\U_{S\in\Omega(G)}S\right)\u\corona{L_{G}^{c}},\\
		\core G & =\left(L(G)\ii\I_{S\in\Omega(G)}S\right)\u\core{L_{G}^{c}}.
	\end{align*}
	Taking the intersection with $L^{c}(G)$ in both equalities above,
	the result follows immediately.
\end{proof}

\begin{theorem}\label{asoikjdo123asoikjdo123}
	For every graph $G$, it holds that $\corona G\ud N_G(\core G)=V(G)$
	if and only if $\corona{L_{G}}\ud N_{L^c_G}(\core{L_{G}})=L^{c}(G)$.
\end{theorem}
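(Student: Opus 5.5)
Reading the right-hand side as $\corona{L_{G}^{c}}\ud N_{L^c_G}(\core{L_{G}^{c}})=L^{c}(G)$, which is what the neighbourhood subscript $L^c_G$ and the target set $L^c(G)$ indicate, the plan is to split $V(G)=L(G)\ud L^{c}(G)$ and show two things. First, the inclusion $L(G)\s \corona G\u N_G(\core G)$ holds for \emph{every} graph. Second, on $L^{c}(G)$ the two sides of the desired identity coincide with those of $L^{c}_G$. Disjointness of $\corona G$ and $N(\core G)$ is automatic, as noted at the start of this section, so only coverage needs checking.

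\emph{Step 1 (the part $L(G)$).} Let $\Gamma=\{S\ii L(G): S\in\Omega(G)\}$.
\begin{itemize}
\item By additivity $\alpha(G)=\alpha(L_G)+\alpha(L_G^c)$ in \cref{larsonthm}, and since $|S\ii L(G)|\le\alpha(L_G)$ and $|S\ii L^c(G)|\le\alpha(L^c_G)$, both inequalities are equalities. Hence $\emptyset\neq\Gamma\s\Omega(L_G)$.
\item Since $L_G$ is König--Egerváry, \cref{asdouih129i3} gives $\U\Gamma\u N_{L_G}(\I\Gamma)=L(G)$.
\item By construction $\U\Gamma=\corona G\ii L(G)$ and $\I\Gamma=\core G\ii L(G)$.
\end{itemize}
Therefore $L(G)\s\corona G\u N_G(\core G)$ unconditionally.

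\emph{Step 2 (the part $L^c(G)$).} By \cref{lasldllas}, $\corona G\ii L^c(G)=\corona{L^c_G}$ and $\core G\ii L^c(G)=\core{L^c_G}$. It remains to show
\[
N_G(\core G)\ii L^c(G)=N_{L^c_G}(\core{L^c_G}).
\]
Write $\core G$ as the union of $\core G\ii L(G)$ and $\core{L^c_G}$. The neighbours inside $L^c(G)$ of the second piece are exactly $N_{L^c_G}(\core{L^c_G})$. So the remaining task is to show that $\core G\ii L(G)$ has no neighbours in $L^c(G)$.

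\emph{Step 3 (the key point).} Let $I$ be a maximum critical independent set, so $L(G)=I\u N(I)$ by \cref{larsonthm}. As in the proof of \cref{89uih9uiy9iu}, there is $T\in\Omega(G)$ with $T\ii L(G)=I$. Hence
\[
\core G\ii L(G)\s T\ii L(G)=I.
\]
Moreover $N(I)\s L(G)$ by the equality $L(G)=I\u N(I)$, so $I$ has no neighbours in $L^c(G)$, and neither does $\core G\ii L(G)$. This proves the displayed equality in Step 2.

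\emph{Conclusion.} Combining Steps 1--3, $V(G)=\corona G\u N_G(\core G)$ holds if and only if its intersection with $L^c(G)$ is all of $L^c(G)$. By Step 2 that intersection is $\corona{L^c_G}\u N_{L^c_G}(\core{L^c_G})$, so this is exactly the stated condition.

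The step I expect to need the most care is Step 3, the claim that the core's $L$-part sits inside $I$. This relies on the existence of $T$ with $T\ii L(G)=I$. That fact comes from \cref{i123ij1i2i2i} together with $|I|=\alpha(L_G)$, using that $I$ is a maximum independent set of $L_G$ whose union with any member of $\Omega(L^c_G)$ lies in $\Omega(G)$. I would verify this carefully, in particular that such a union is independent in $G$, which again follows from $N(I)\s L(G)$.
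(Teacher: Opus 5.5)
Your proposal is correct and follows essentially the same route as the paper. Both use the split $V(G)=L(G)\ud L^{c}(G)$, apply \cref{asdouih129i3} to the König--Egerváry graph $L_G$ so that $L(G)$ is covered unconditionally, and then use \cref{lasldllas} together with $\core G\ii L(G)\s I$ to reduce the question to $L^c_G$. Your reading of the statement's $L_G$ as $L^c_G$ is the one the paper's proof actually uses. Your version is somewhat more explicit than the paper's. You check $\Gamma\s\Omega(L_G)$, you state $N(I)\s L(G)$ outright, and you obtain both directions at once from the identity $N_G(\core G)\ii L^c(G)=N_{L^c_G}(\core{L^c_G})$.
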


\begin{proof}
	As in the proof of \cref{89uih9uiy9iu}, we have
	\begin{align*}
		\corona G & =\left(L(G)\ii\U_{S\in\Omega(G)}S\right)\u\corona{L_{G}^{c}},\\
		\core G & =\left(L(G)\ii\I_{S\in\Omega(G)}S\right)\u\core{L_{G}^{c}}.
	\end{align*}
	But by \cref{larsonthm}, $L_{G}$ is a K\H{o}nig--Egerváry graph; therefore,
	by \cref{asdouih129i3},
	\[
	L(G)\s\left(L(G)\ii\U_{S\in\Omega(G)}S\right)\u
	N\left(L(G)\ii\I_{S\in\Omega(G)}S\right).
	\]
	
	On the other hand, suppose that
	$\corona{L_{G}}\ud N(\core{L_{G}})=L^{c}(G)$. Then we have
	
	\begin{eqnarray*}
		\corona G\u N(\core G) & = &
		\left(L(G)\ii\U_{S\in\Omega(G)}S\right)\u\corona{L_{G}^{c}}\\
		&  & \u N\left(\left(L(G)\ii\I_{S\in\Omega(G)}S\right)\u\core{L_{G}^{c}}\right)\\
		& \supset & \corona{L_{G}^{c}}\u N(\core{L_{G}^{c}})\\
		&  & \left(L(G)\ii\U_{S\in\Omega(G)}S\right)\u
		N\left(L(G)\ii\I_{S\in\Omega(G)}S\right)\\
		& \supset & L^{c}(G)\u L(G)\\
		& = & V(G).
	\end{eqnarray*}
	
	Conversely, suppose that $\corona G\ud N(\core G)=V(G)$.
	Let $I$ be a maximum critical independent set; then, by \cref{larsonthm},
	$L(G)=I\u N(I)$. As in the proof of \cref{89uih9uiy9iu}, there exists
	$T\in\Omega(G)$ such that $T\ii L(G)=I$, where $\a I=\alpha(L_{G})$.
	Thus,
	\[
	\core G\ii L(G)\s I.
	\]
	
	Let $v\in L^{c}(G)$. If $v\in N(\core G)$, then by \cref{lasldllas},
	\[
	v\in N(\core G\ii L^{c}(G))=N(\core{L_{G}^{c}}).
	\]
	
	Otherwise, suppose that $v\in\corona G$. Then, by \cref{lasldllas},
	$v\in\corona{L_{G}^{c}}$. Therefore, we have proved that
	$L^{c}(G)\s\corona{L_{G}^{c}}\u N(\core{L_{G}^{c}})$.
\end{proof}

\begin{theorem}[\label{asoikjdo123}\cite{kevincorecorona2biciclicbicritical}]
	Let $G$ be a $2$-bicritical graph with at most two odd cycles.
	Then $\corona G\ud N(\core G)=V(G)$ if and only if $G$ does not have two odd cycles sharing exactly one vertex.
\end{theorem}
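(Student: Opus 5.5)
The statement is \cref{asoikjdo123}: for a $2$-bicritical graph $G$ with at most two odd cycles, $\corona G\ud N(\core G)=V(G)$ holds exactly when no two odd cycles share exactly one vertex. Disjointness of $\corona G$ and $N(\core G)$ always holds, so only the covering $V(G)\s\corona G\u N(\core G)$ needs proof. The plan is to compute $\core G$ and $\corona G$ explicitly from the odd-cycle structure, split into the same configurations as \cref{oij123ioj132ioj}.

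\emph{Zero or one odd cycle.} A $2$-bicritical graph is not König--Egerváry unless it is empty, so $G$ has at least one odd cycle. If it has exactly one, $G$ is almost bipartite and non-König--Egerváry, and the partition is the known result of \cite{levit2025almost}.

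\emph{Key reduction for two odd cycles.} A $2$-bicritical graph has $|N(S)|>|S|$ for every nonempty independent set $S$, hence no leaves and no isolated vertices. Repeatedly stripping pendant structure then shows $G$ is, per component, the union of its odd cycles plus paths joining them. Since there are at most two odd cycles and any additional cycle would create a third odd cycle (two odd cycles joined by an extra path produce new cycles of both parities unless they form a theta), the possible shapes are:
\begin{enumerate}
\item two disjoint odd cycles in separate components;
\item a dumbbell: two disjoint odd cycles joined by a path;
\item two odd cycles sharing exactly one vertex (a figure-eight);
\item a theta graph: two odd cycles sharing a path of length at least one, the third cycle being even.
\end{enumerate}
In a theta, three internally disjoint paths with lengths $a,b,c$ give cycles of lengths $a+b$, $a+c$, $b+c$, whose parities cannot all allow exactly two odd ones unless the configuration is as described. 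I would verify carefully that $2$-bicriticality forbids pendant trees and extra even cycles, though the latter is arguably excluded anyway once the odd-cycle count is fixed.

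\emph{Computation in each case.} For a disjoint union of two odd cycles, each cycle $C_{2k+1}$ has empty core and full corona, so $\corona G=V(G)$ and the partition holds. For a dumbbell, I would parametrize the maximum independent sets directly and show every vertex lies in some maximum independent set. The idea is that $\alpha$ is additive over the pieces, with rotational freedom on each odd cycle and the path absorbing parity.

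For a figure-eight, the shared vertex $w$ should lie in no maximum independent set. Then $\core G=\emptyset$ while $\corona G=V(G)-\{w\}$. Hence $w\notin\corona G\u N(\core G)$ and the partition fails, which gives the ``only if'' direction. For a theta, I expect $\corona G=V(G)$ and $\core G=\emptyset$ again, or at least $\corona G\u N(\core G)=V(G)$; I would check this by choosing, for each vertex, a maximum independent set through it, built by alternating along the three paths.

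\emph{Main obstacle.} The hardest step is the structural reduction: showing that a $2$-bicritical graph with two odd cycles is exactly one of the shapes above. This requires that $2$-bicriticality kills all tree-like attachments; a vertex of degree one already violates $|N(S)|>|S|$ at its neighbor-side, and longer pendant trees need an independent set of alternate vertices. The dumbbell and theta computations may also need care over path-length parities. There I would fall back on counting via \cref{oij123ioj132ioj}, since $|\core G|+|\corona G|=2\alpha(G)$ together with $\core G=\emptyset$ would force $\corona G$ to be small, so the values must be matched case by case.
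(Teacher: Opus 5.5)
There is a genuine gap in the structural reduction that the whole case analysis rests on. You claim that extra even cycles are ``excluded anyway once the odd-cycle count is fixed''; this is false. Take triangles $abc$ and $dgh$ together with the $4$-cycle $c\,d\,e\,f$ (edges $cd,de,ef,fc$).
\begin{itemize}
\item Its blocks are two triangles and a $4$-cycle, so it has exactly two odd cycles and $\alpha=3$.
\item A direct check gives $|N(S)|>|S|$ for every nonempty independent $S$. The tightest cases are $\{a,f\}$ and $\{e,g\}$, each with three neighbours.
\end{itemize}
So it is $2$-bicritical, yet it is none of your four shapes. A second $c$--$d$ path of the same parity as the connector closes only an even cycle, contrary to your parenthetical remark.

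What $2$-bicriticality actually gives is that no \emph{leaf block} is bipartite. For a bipartite leaf block with cut vertex $c$ and colour classes $X\ni c$, $Y$, either $S=Y$ or $S=X-\{c\}$ has $|N(S)|\le|S|$. So besides an odd cycle, a theta, a figure-eight and two odd-cycle components, there is a whole family: vertex-disjoint odd cycles $C_1,C_2$ joined by an \emph{arbitrary} connected bipartite graph $H$ with $H\cap C_1=\{c\}$ and $H\cap C_2=\{d\}$. Here $H$ is a chain of bridges, even cycles and other bipartite blocks. Your ``if'' direction only covers $H$ a path. Your figure-eight computation is correct, and with this block structure it does give the ``only if'' direction.

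Even for genuine dumbbells your plan fails when the path has even length. Join triangles $abc$ and $dgh$ by the path $c\,m\,d$; this is again $2$-bicritical with $\alpha=3$. The maximum independent sets are exactly $\{x,m,y\}$ with $x\in\{a,b\}$, $y\in\{g,h\}$. Hence $\core G=\{m\}$ and $c,d\notin\corona G$; the partition holds only because $c,d\in N(\core G)$.

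The fallback on \cref{oij123ioj132ioj} cannot repair this. The value of $|\core G|+|\corona G|$ says nothing about $N(\core G)$. Moreover, the first item of that theorem fails for two triangles joined by an edge: that graph is connected, $2$-bicritical, with vertex-disjoint odd cycles, yet $\core G=\emptyset$ and $\corona G=V(G)$, so the sum is $2\alpha(G)+2$.

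The missing idea is to treat the whole disjoint family at once through the bipartite graph $H'=H-c-d$. Let $|C_1|=2p+1$ and $|C_2|=2q+1$. Then:
\begin{itemize}
\item $\alpha(G)=p+q+\alpha(H')$.
\item Every $S\in\Omega(G)$ has $S\cap V(H')\in\Omega(H')$.
\item Every $T\in\Omega(H')$ extends to a member of $\Omega(G)$ avoiding $c$ and $d$.
\end{itemize}
Consequently $\core G=\core{H'}$, and $C_1-c$ and $C_2-d$ lie in $\corona G$. Also $V(H')$ is covered by the König--Egerváry partition of $H'$.

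For $c$, the set $R=N(c)\cap V(H')$ lies in one colour class $Y$ of $H'$. If $R$ meets $\core{H'}=\core G$, then $c\in N(\core G)$. Otherwise, use the Gallai--Edmonds sets $D(H')$, $A(H')$, $C(H')$; for bipartite $H'$ one has $\core{H'}=D(H')$, so $R\subseteq A(H')\cup C(H')$. Then $(A(H')-R)\cup((C(H')\cap Y)-R)$ is a vertex cover of $H'-R$ of size $\mu(H')-|R|$. By König's theorem some $T\in\Omega(H')$ avoids $R$. Then $T\cup\{c\}$, together with maximum independent sets of $C_1-N[c]$ and $C_2-d$, lies in $\Omega(G)$, so $c\in\corona G$. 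The same argument applies to $d$.
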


\begin{theorem}\label{oi123ji12i}
	Let $G$ be a graph such that $L_{G}^{c}$ has at most two odd cycles.
	Then $\corona G\ud N(\core G)=V(G)$ if and only if $G$ does not have two odd cycles sharing exactly one vertex.
\end{theorem}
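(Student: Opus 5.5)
The natural route is to chain \cref{asoikjdo123asoikjdo123} with \cref{asoikjdo123}. Note first that the hypothesis in \cref{asoikjdo123asoikjdo123} contains an evident typo. The condition it really proves, as its proof shows via \cref{lasldllas}, is
\[
\corona{L_{G}^{c}}\ud N_{L_{G}^{c}}(\core{L_{G}^{c}})=L^{c}(G).
\]
We use it in this corrected form. Thus the partition $\corona G\ud N(\core G)=V(G)$ holds if and only if the core--corona partition holds for the graph $L_{G}^{c}$ itself.

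We then split according to whether $L^{c}(G)$ is empty.

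If $L^{c}(G)=\emptyset$, then $G=L_G$ is König--Egerváry. By \cref{0i1j230i123} the partition holds, and the right-hand condition is vacuous (the empty set trivially satisfies it).

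If $L^{c}(G)\neq\emptyset$, then by the Observation following \cref{larsonthm}, $L_{G}^{c}$ is $2$-bicritical, and by hypothesis it has at most two odd cycles. \cref{asoikjdo123} then says that the partition holds for $L_{G}^{c}$ exactly when $L_{G}^{c}$ does not have two odd cycles sharing exactly one vertex. Combining this with \cref{asoikjdo123asoikjdo123} gives the equivalence, with the condition read on $L_{G}^{c}$.

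The main obstacle is reconciling this with the statement as written, which speaks of odd cycles of $G$ rather than of $L_{G}^{c}$. In the intended setting, where $G$ itself has at most two odd cycles, I would argue that the condition for $G$ is equivalent to the condition for $L_{G}^{c}$.

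One direction is immediate: cycles of the induced subgraph $L_{G}^{c}$ are cycles of $G$, so two odd cycles of $L_{G}^{c}$ sharing exactly one vertex are such a pair in $G$.

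For the converse, I plan to show that every odd cycle of $G$ lies inside $L^{c}(G)$ whenever $L^{c}(G)\neq\emptyset$. Suppose instead that $L^{c}(G)$ misses some odd cycle. Then $L_{G}^{c}$ has at most one odd cycle. But a nonempty $2$-bicritical graph with at most one odd cycle should essentially be a single odd cycle; more precisely, I expect that each component of a $2$-bicritical graph has at least two odd cycles, or is an odd cycle. This needs checking, for instance through Gallai--Edmonds or the ear structure of bicritical graphs. If that fails, I would instead read the hypothesis literally, apply it to $L_{G}^{c}$, and state the condition for $L_{G}^{c}$, which is what the chain of cited theorems directly yields.

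Next I would handle odd cycles meeting $L(G)$, and this step is the delicate one. Write $L(G)=I\cup N(I)$ with $I$ a maximum critical independent set. I expect that an odd cycle through $L(G)$ contributes no shared vertex with an odd cycle of $L_{G}^{c}$ that would change the condition. To see this, I would try to show, using criticality of $I$ (so that $G[I\cup N(I)]$ has a matching of $N(I)$ into $I$, and the edges into $L^{c}$ leave only from $N(I)$), that such a configuration contradicts the uniqueness of the Larson decomposition or the additivity $\alpha(G)=\alpha(L_G)+\alpha(L_G^c)$.
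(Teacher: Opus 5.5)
The part of your argument that works is exactly the paper's proof. The paper's proof is just the chain \cref{asoikjdo123asoikjdo123} plus \cref{asoikjdo123}; you are right to read the former with $L_G^c$ in place of $L_G$ on the right, and the empty case is trivial. That chain proves the equivalence with the condition stated for $L_G^c$. Your ``immediate'' direction also correctly gives one implication for $G$: if $G$ has no two odd cycles sharing exactly one vertex, then the partition holds.

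The gap is the reconciliation for the converse, and it cannot be closed, because the claims you plan to prove are false. Take the $5$-cycle $v_0v_1v_2v_3v_4$, add a triangle $v_0pq$, and attach pendant vertices $p'$ to $p$ and $q'$ to $q$.
\begin{itemize}
\item This $G$ has exactly two odd cycles, and they meet only in $v_0$.
\item Every independent set $J$ satisfies $|J|-|N(J)|\le 0$, with equality only for $J\subseteq\{p',q'\}$. Hence $L(G)=\{p,q,p',q'\}$ and $L_G^c=C_5$.
\item With $S\in\Omega(C_5)$, the maximum independent sets are $\{p',q'\}\cup S$, together with $\{p,q'\}\cup S$ and $\{p',q\}\cup S$ when $v_0\notin S$.
\item So $\core{G}=\emptyset$ and $\corona{G}=V(G)$. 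The partition holds although $G$ has two odd cycles sharing exactly one vertex.
\end{itemize}
This refutes your claim that all odd cycles of $G$ lie in $L^c(G)$ when $L^c(G)\neq\emptyset$. Your expectation that a connected $2$-bicritical graph with at most one odd cycle is an odd cycle is in fact true, but it gives no contradiction: the second odd cycle simply passes through $L(G)$. The same example refutes your plan to show that odd cycles through $L(G)$ cannot produce the forbidden configuration.

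The case $L^c(G)=\emptyset$ fails the same way, because there the condition is vacuous only for $L_G^c$, not for $G$. Take two triangles $xab$ and $xcd$ with pendant vertices attached to $a$ and $c$. This is a K\"onig--Egerv\'ary graph ($\alpha=4$, $\mu=3$, $n=7$), so the partition holds, yet the two triangles share exactly $x$.

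So, with the condition read on $G$, the ``only if'' direction fails, even when $G$ itself has at most two odd cycles. The cited theorems, and the paper's one-line proof, support only the version with the condition on $L_G^c$. Your fallback is the correct conclusion: drop the reconciliation and state the theorem for $L_G^c$.
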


\begin{proof}
	It follows directly from \cref{asoikjdo123} and \cref{asoikjdo123asoikjdo123}.
\end{proof}

\section{Algorithmic Consequences}\label{sss4}

In this final section we discuss algorithmic consequences of the
structural results obtained in the previous sections.

\begin{problem}\label{asio123ji12as}
	Let $G$ be a 2-bicritical graph with at most two odd cycles.
	Determine the independence number $\alpha (G)$.
\end{problem}

\begin{problem}\label{asio123ji12asXXX}
	Let $G$ be a graph with at most two odd cycles.
	Determine the independence number $\alpha (G)$.
\end{problem}

\begin{theorem}[\cite{kevincorecorona2biciclicbicritical}\label{aij3i2j1i}]
	Let $G$ be a $2$-bicritical graph with at most two odd cycles.
	If $G$ is connected, then
	\[
	\alpha(G)+\mu(G)=n-1.
	\]
	If $G$ is disconnected, then
	\[
	\alpha(G)+\mu(G)=n-2.
	\]
\end{theorem}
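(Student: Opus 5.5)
The plan is to reduce to connected pieces, get the upper bound from the fact that a $2$-bicritical graph is never König--Egerváry, and get the lower bound by deleting edges of odd cycles to land in a bipartite graph.

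\emph{Reduction to components.} The condition $|N(S)|>|S|$ of \cref{1928u3123} is local: if $S$ is independent, then $S\cap V(H)$ is independent in each component $H$. Hence every component of a $2$-bicritical graph is $2$-bicritical. No component $H$ can be bipartite. Indeed, if $H$ had bipartition $(A,B)$ with $|A|\ge|B|$, then $N(A)\subseteq B$, so $|N(A)|\le|A|$, contradicting \cref{1928u3123}. So every component contains an odd cycle. Since $G$ has at most two odd cycles, there are two cases. If $G$ is connected, $G$ itself is the only component. If $G$ is disconnected, it has exactly two components $H_1,H_2$, each $2$-bicritical with exactly one odd cycle. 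Both $\alpha$ and $\mu$ are additive over components. It therefore suffices to prove that a connected $2$-bicritical graph $H$ with at most two odd cycles satisfies $\alpha(H)+\mu(H)=|H|-1$; adding this over $H_1,H_2$ gives $n-2$.

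\emph{Upper bound.} I will show $\alpha(H)+\mu(H)\le |H|-1$ by showing $H$ is not König--Egerváry. Suppose it were, and take $S\in\Omega(H)$. In a König--Egerváry graph there is a matching from $V(H)-S$ into $S$, so $|V(H)-S|\le|S|$. By maximality of $S$ we have $N(S)=V(H)-S$, hence $|N(S)|\le|S|$. This contradicts \cref{1928u3123}. Since $\alpha+\mu\le n$ always holds with equality exactly for König--Egerváry graphs, we get $\alpha(H)+\mu(H)\le|H|-1$.

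\emph{Lower bound.} The basic tool is this: if $H-e$ is bipartite for some edge $e$, then
\[
\alpha(H)+\mu(H)\ge\bigl(\alpha(H-e)-1\bigr)+\mu(H-e)=|H|-1,
\]
because deleting an endpoint of $e$ from a maximum independent set of $H-e$ gives an independent set of $H$, and every matching of $H-e$ is a matching of $H$. I will find such an $e$ in the following cases.
\begin{itemize}
\item If $H$ has exactly one odd cycle, I take $e$ on that cycle.
\item If the two odd cycles share at least two vertices, they form a theta-configuration. Since there are only two odd cycles, every edge of the common path lies on both of them. Deleting such an edge destroys both odd cycles, so $H-e$ is bipartite.
\end{itemize}

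\emph{Main obstacle.} The hard case is two odd cycles $C_1,C_2$ sharing at most one vertex. Here no single edge meets both cycles, and two deletions only give the weak bound $n-2$. I would first delete an edge $e_1=xy\in E(C_1)\setminus E(C_2)$. Then $H-e_1$ has the unique odd cycle $C_2$, and the argument above (which does not use bicriticality) gives $\alpha(H-e_1)+\mu(H-e_1)\ge |H|-1$. Since $\mu(H)\ge\mu(H-e_1)$, it remains to choose $e_1$ so that $\alpha(H)=\alpha(H-e_1)$, i.e.\ so that some maximum independent set of $H-e_1$ avoids one of $x,y$.

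To do this, I would assume the contrary: for every choice of $e_1$ on $C_1$, both endpoints lie in $\core{H-e_1}$. I would then build an independent set $S$ of $H$ with $|N_H(S)|\le|S|$, contradicting $2$-bicriticality. Concretely, I would apply Larson's decomposition (\cref{larsonthm}) to $H-e_1$ and use \cref{0i1j230i123} on its König--Egerváry part to find a critical set containing $x$ and $y$. Adding back the edge $e_1$ should increase $|N(S)|$ by at most the amount needed to produce the contradiction. Checking this last step carefully is where I expect the real work to lie.
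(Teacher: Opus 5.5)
The paper does not prove this statement; it cites it from earlier work. Judged on its own, your argument is correct in most cases but has a genuine gap in the one case where $2$-bicriticality actually matters.

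\textbf{What is correct.} The following parts hold:
\begin{itemize}
\item the reduction to components;
\item the upper bound, since a $2$-bicritical graph is not K\"onig--Egerv\'ary;
\item the lower bound when $H$ has one odd cycle or the two odd cycles form a theta-configuration.
\end{itemize}
The case of two odd cycles sharing exactly one vertex $x$ should not be lumped with the hard case. There $H-x$ is bipartite, so $\alpha(H)+\mu(H)\ge\alpha(H-x)+\mu(H-x)=|H|-1$.

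\textbf{The gap.} What remains unproved is a connected $H$ with two \emph{vertex-disjoint} odd cycles. This is exactly where $2$-bicriticality is needed for the lower bound. For instance, take two disjoint triangles joined through a new vertex $p$, and attach a pendant vertex to $p$. This graph is connected with two disjoint odd cycles, but $n=8$ and $\alpha=\mu=3$, so $\alpha+\mu=n-2$.

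In this case your route cannot be completed, because its key claim is false. Take two triangles $a_1a_2a_3$ and $b_1b_2b_3$ joined by the edge $a_1b_1$. This graph is connected and $2$-bicritical, has exactly two disjoint odd cycles, and has $\alpha=2$, $\mu=3$. For every edge $e$ of either triangle, $\alpha(H-e)=3$, and both endpoints of $e$ lie in every maximum independent set of $H-e$. So your ``contrary'' assumption actually holds, and no violation of $|N(S)|>|S|$ can be extracted from it.

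In this example the lost unit comes back on the matching side. The set $\{a_1b_1,b_2b_3\}$ is a maximum matching of $H-a_2a_3$ that exposes $a_2$ and $a_3$, so $\mu(H)=\mu(H-e)+1$. Your estimate $\mu(H)\ge\mu(H-e_1)$ throws exactly this away.

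\textbf{One way to close the case.} The following structural argument works.
\begin{itemize}
\item Each $C_i$ is a whole block. If the block containing $C_i$ were larger, an ear with distinct ends on $C_i$ would create a new odd cycle meeting $C_i$. That cycle would have to be $C_{3-i}$, contradicting disjointness.
\item No bipartite block is a leaf block. Such a block, with bipartition $(X,Y)$ and cut vertex $z\in X$, violates $|N(S)|>|S|$ for $S=Y$ or for $S=X\setminus\{z\}$.
\item Hence the block--cut tree is a path. So $H$ consists of $C_1$, $C_2$, and a connected bipartite graph $R$ meeting $C_1$ only in a vertex $z_0$ and $C_2$ only in a vertex $z_m\ne z_0$.
\item Write $|C_i|=2k_i+1$. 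Then $\mu(H)=k_1+k_2+\mu(R)$ and $\alpha(H)=k_1+k_2+\alpha(R-z_0-z_m)$.
\item Apply $|N(S)|>|S|$ to independent sets $S\subseteq V(R)\setminus\{z_0,z_m\}$, together with Hall's theorem. Splitting on whether $z_0$ and $z_m$ lie on the same side of $R$, this gives $\alpha(R-z_0-z_m)+\mu(R)\ge|R|-1$.
\end{itemize}
Since $n=|R|+2k_1+2k_2$, this yields $\alpha(H)+\mu(H)\ge n-1$, which is the missing lower bound.
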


It is well known that the matching number $\mu(G)$ can be computed in
polynomial time \cite{edmonds1965paths}. Therefore, as an immediate consequence of
\cref{aij3i2j1i}, we obtain the following result.

\begin{lemma}
	The \cref{asio123ji12as} are solvable in polynomial time.
\end{lemma}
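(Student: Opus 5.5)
The plan is to turn the identity of \cref{aij3i2j1i} into an algorithm. Let $G$ be a $2$-bicritical graph with at most two odd cycles and $n=|V(G)|$. In polynomial time we can determine the connected components of $G$, for instance by breadth-first search. If $G$ is connected, \cref{aij3i2j1i} gives
\[
\alpha(G)=n-1-\mu(G);
\]
if $G$ is disconnected, it gives
\[
\alpha(G)=n-2-\mu(G).
\]
So it remains to compute $\mu(G)$, which Edmonds' blossom algorithm \cite{edmonds1965paths} does in polynomial time. One arithmetic operation then returns $\alpha(G)$.

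If the input is not promised to be $2$-bicritical, we should first check the hypotheses in polynomial time, or simply note that the problem is a promise problem.
\begin{itemize}
\item By \cref{1928u3123}, $2$-bicriticality means $|N(S)|>|S|$ for every nonempty independent $S$. Read literally this is not obviously easy to test. However, the identity is only needed under the promise, and the paper's later results can use Larson's decomposition, for which $L(G)$ is computable in polynomial time via maximum critical independent sets \cite{larson2011critical}.
\item The condition of having at most two odd cycles is checked by counting parity across the cycle space. A graph with at most two odd cycles has very restricted structure, and one can enumerate its odd cycles by working in each block.
\end{itemize}

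Since the statement is phrased for the problem under its hypotheses, I would treat it as a promise problem. The proof then consists of two steps: (i) decide connectivity; (ii) compute $\mu(G)$ and apply the appropriate formula.

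The only potential obstacle is whether verifying membership in the class is required. I expect it is not, so the argument is a short corollary of \cref{aij3i2j1i}.
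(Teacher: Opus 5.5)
Your proof is correct and is exactly the paper's argument: the lemma is stated as an immediate consequence of \cref{aij3i2j1i} together with Edmonds' polynomial-time algorithm for $\mu(G)$, and a connectivity test chooses between $\alpha(G)=n-1-\mu(G)$ and $\alpha(G)=n-2-\mu(G)$. The paper also implicitly treats the hypotheses as a promise, so your membership-checking remarks are unnecessary; as written, counting parity in the cycle space would not count odd cycles, and $2$-bicriticality is in fact testable in polynomial time because it is equivalent to $L(G)=\emptyset$.
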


\begin{problem}\label{asio123ji12}
	Let $G$ be a graph with at most two odd cycles.
	Determine the core, $\core G$.
\end{problem}

\begin{problem}\label{asio123ji12asio123ji12}
	Let $G$ be a graph with at most two odd cycles.
	Determine the corona, $\corona G$.
\end{problem}

\begin{theorem}[\cite{levit2003alpha+}\label{TheoremXXXLM}]
	If $G$ is a K\H{o}nig-Egerváry graph, then $N(\core G)=V(G)-\corona G.$
\end{theorem}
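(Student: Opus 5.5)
The plan is to derive the statement $N(\core G)=V(G)-\corona G$ for a König--Egerváry graph $G$ from \cref{0i1j230i123}, together with the disjointness observation at the start of \cref{sss3}. We show the two inclusions separately.

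\emph{Inclusion $N(\core G)\s V(G)-\corona G$.} Let $x\in N(\core G)$, so $x$ is adjacent to some $c\in\core G$. Every $S\in\Omega(G)$ contains $c$. Since $S$ is independent, $x\notin S$. Hence $x$ lies in no maximum independent set, i.e.\ $x\notin\corona G$. This holds for every graph; König--Egerváry is not needed.

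\emph{Inclusion $V(G)-\corona G\s N(\core G)$.} The first item of \cref{0i1j230i123} gives $\corona G\u N(\core G)=V(G)$ for König--Egerváry graphs. So any vertex outside $\corona G$ must lie in $N(\core G)$. Alternatively, this follows from \cref{asdouih129i3} with $\Gamma=\Omega(G)$: then $\U\Gamma=\corona G$ and $\I\Gamma=\core G$.

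Combining the two inclusions yields the equality. There is no real obstacle, since the nontrivial content is the covering property imported from \cref{0i1j230i123}. If one wanted a self-contained argument, the hard step would be the second inclusion. There I would use \cref{asdouih12} with $\Gamma=\Omega(G)$: it gives a matching from $V(G)-\corona G$ into $\core G$, so each vertex outside the corona is matched to, and hence adjacent to, a core vertex.
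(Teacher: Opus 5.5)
Your argument is correct and matches how the paper treats this statement. The paper gives no proof of its own: it cites Levit--Mandrescu, and the same cited fact appears as the first item of \cref{0i1j230i123}. Your two inclusions, the disjointness remark at the start of \cref{sss3} plus the covering equality $\corona G\cup N(\core G)=V(G)$, are exactly the paper's stated ingredients combined.

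Note that this derivation restates the imported result rather than proving it independently. Your matching route via \cref{asdouih12} with $\Gamma=\Omega(G)$ is also valid. Since $\core G\subseteq\corona G$, each vertex outside the corona must be matched to a core vertex (it cannot be left fixed by $M$), and is therefore adjacent to it.
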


\begin{theorem}\label{asij123j}
	\cref{asio123ji12} and \ref{asio123ji12asio123ji12} are solvable in polynomial time.
\end{theorem}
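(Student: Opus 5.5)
We compute $\core G$ and $\corona G$ through Larson's decomposition, handling the König--Egerváry part $L_G$ and the $2$-bicritical part $L_G^c$ separately and then combining them with the identities from the proof of \cref{89uih9uiy9iu}:
\begin{align*}
\corona G &=\left(L(G)\ii\U_{S\in\Omega(G)}S\right)\u\corona{L_{G}^{c}},\\
\core G &=\left(L(G)\ii\I_{S\in\Omega(G)}S\right)\u\core{L_{G}^{c}}.
\end{align*}
Since maximum independent sets of $G$ restrict to $\Omega(L_G)$ on $L(G)$ and to $\Omega(L_G^c)$ on $L^c(G)$, these reduce to $\corona G=\corona{L_G}\u\corona{L_G^c}$ and $\core G=\core{L_G}\u\core{L_G^c}$. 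So it suffices to compute $L(G)$ and the core and corona of each part.

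\textbf{Step 1: computing $L(G)$.} A maximum critical independent set can be found in polynomial time (Larson's algorithm via maximum matchings in the bipartite double cover; I assume this is citable). By \cref{larsonthm}, $L(G)=J\u N(J)$ for any such $J$, and $L^c(G)$ follows.

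\textbf{Step 2: the König--Egerváry part $L_G$.} Here $\alpha$ and $\mu$ are computable because $\alpha(L_G)=|L(G)|-\mu(L_G)$. To decide whether $v\in\corona{L_G}$, test whether $\alpha(L_G-N[v])=\alpha(L_G)-1$. To decide whether $v\in\core{L_G}$, test whether $\alpha(L_G-v)<\alpha(L_G)$. Deleted subgraphs of a König--Egerváry graph need not be König--Egerváry, so the core test needs care. One route is the Levit--Mandrescu characterization of $\core$ for König--Egerváry graphs via matchings, or the critical-set description: the core equals the intersection of all maximum critical independent sets, which is computable via critical difference. Once the core is known, \cref{TheoremXXXLM} gives $\corona{L_G}=L(G)-N(\core{L_G})$ directly.

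\textbf{Step 3: the $2$-bicritical part $L_G^c$.} $L_G^c$ has at most two odd cycles, as does every subgraph of it. For each $v$, the graphs $L_G^c-v$ and $L_G^c-N[v]$ are graphs with at most two odd cycles, though no longer $2$-bicritical. Compute their independence numbers recursively: apply Larson's decomposition again, use $\alpha=n-\mu$ on the KE part, and use \cref{aij3i2j1i} on the $2$-bicritical part. The recursion is just one decomposition per subgraph, so $\alpha$ of any graph with at most two odd cycles is polynomial (this also settles \cref{asio123ji12asXXX}). Then $v\in\core{L_G^c}$ iff $\alpha(L_G^c-v)<\alpha(L_G^c)$, and $v\in\corona{L_G^c}$ iff $\alpha(L_G^c-N[v])=\alpha(L_G^c)-1$. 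This vertex-by-vertex method actually works for any part, including $L_G$ in Step 2, because every induced subgraph of $G$ stays in the class. So the simplest proof applies it to $G$ itself with $O(n)$ calls to the $\alpha$ oracle.

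\textbf{Main obstacle.} The step I expect to be the real difficulty is justifying that $\alpha$ is polynomial on the whole class, which combines \cref{larsonthm}, \cref{aij3i2j1i}, and the polynomial computability of $L(G)$. I would handle the possibly disconnected $2$-bicritical part through the two cases of \cref{aij3i2j1i}, which account for connected versus disconnected graphs.
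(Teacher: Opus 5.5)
Your opening reduction is false, even inside the class. Restricting maximum independent sets of $G$ only gives $\corona{G}\cap L(G)\subseteq\corona{L_G}$ and $\core{G}\cap L(G)\supseteq\core{L_G}$. A maximum independent set of $L_G$ that uses vertices of $N(J)$ adjacent to $L^c(G)$ need not extend to a maximum independent set of $G$. Only the $L^c$-side is exact (\cref{lasldllas}). That is why the paper keeps $L(G)\cap\bigcup_{S\in\Omega(G)}S$ and applies \cref{saodi123} to the subfamily $\Gamma=\{S\cap L(G):S\in\Omega(G)\}$ rather than to $\Omega(L_G)$.

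For a concrete failure, let $D$ consist of the triangles $abc$ and $def$ together with a vertex $g$ adjacent to $c$ and $d$. Let $G$ be obtained from $D$ by adding a pendant path with edges $gw$ and $wu$. Then:
\begin{itemize}
\item $G$ has exactly two odd cycles, and $D$ is $2$-bicritical;
\item $\{u\}$ is the maximum critical independent set, so $L(G)=\{u,w\}$ and $L_G^c=D$;
\item $g$ lies in every maximum independent set of $D$, so $w$ lies in no maximum independent set of $G$.
\end{itemize}
Hence $\corona{G}=\{u,a,b,g,e,f\}$ and $\core{G}=\{u,g\}$, whereas $\corona{L_G}\cup\corona{L_G^c}$ contains $w$ and $\core{L_G}\cup\core{L_G^c}=\{g\}$. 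So the part-by-part plan has to be dropped, and with it Step 2, whose core computation was left vague anyway.

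What remains is your closing remark in Step 3, and it is a complete and correct proof on its own. The ingredients are:
\begin{itemize}
\item $\alpha$ is polynomial on the class, via the Larson decomposition, $\alpha=n-\mu$ on $L_G$, and \cref{aij3i2j1i} on $L_G^c$;
\item $G-v$ and $G-N[v]$ are induced subgraphs, so they still have at most two odd cycles;
\item $v\in\core{G}$ iff $\alpha(G-v)<\alpha(G)$, and $v\in\corona{G}$ iff $\alpha(G-N[v])=\alpha(G)-1$.
\end{itemize}
For $\alpha$ and the core this is exactly the paper's argument. For the corona the paper does not use a second oracle test. Instead it invokes the core--corona partition (\cref{oi123ji12i}) to get $\corona{G}=V(G)\setminus N(\core{G})$ when the odd cycles of $L_G^c$ do not share exactly one vertex. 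Otherwise it deletes the shared vertex $x$ and applies \cref{TheoremXXXLM} to the bipartite graph $G-x$.

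Your test needs none of this structure, only that $\alpha$ is polynomial on a class closed under induced subgraphs. It is therefore more general and sidesteps the case analysis. That case analysis is delicate: in the shared-vertex case $x$ must be excluded explicitly, since for the bowtie $\core{G}=\emptyset$ yet $x\notin\corona{G}$. In exchange, the paper's route exhibits the corona as a single neighbourhood computation from the core, and it needs roughly half as many evaluations of $\alpha$.
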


\begin{proof}
	Finding a maximum critical independent set of a graph can be done
	in polynomial time \cite{zhang1990finding}. Consequently, the Larson
	decomposition $L(G),L^{c}(G)$ can be computed in polynomial time;
	see \cref{larsonthm}.
	
	Let $G$ be a graph with at most two odd cycles. By \cref{larsonthm}, if $L^{c}(G)\neq\emptyset$, then
	the subgraph $L^{c}_{G}$ is a $2$-bicritical graph with at most two odd
	cycles. Hence, by \cref{aij3i2j1i}, the value $\alpha(L^{c}_{G})$ can be
	computed in polynomial time. On the other hand, $L_{G}$ is a
	K\H{o}nig--Egerv\'ary graph, that is,
	$\alpha(L_{G})+\mu(L_{G})=|L(G)|$, and therefore $\alpha(L_{G})$ can
	also be computed in polynomial time. By \cref{larsonthm},
	\[
	\alpha(G)=\alpha(L_{G})+\alpha(L^{c}_{G}),
	\]
	and thus $\alpha(G)$ can be computed in polynomial time. If $L^{c}(G)=\emptyset$, then $G=L_{G}$ is a
	K\H{o}nig--Egerv\'ary graph, and therefore $\alpha(G)$ can be computed
	in polynomial time as well.
	
	For every vertex $v\in V(G)$, the graph $G-v$ has at most two odd
	cycles. Hence, $\alpha(G-v)$ can be computed in polynomial time.
	Therefore, the equality $\alpha(G-v)=\alpha(G)$ can be checked in
	polynomial time. By definition, $v\in\core G$ if and only if
	$\alpha(G-v)\neq\alpha(G)$. Consequently, \cref{asio123ji12}
	is solvable in polynomial time.

	On the other hand, since we know $L_{G}^{c}$, we can by BFS/DFS find
	an odd cycle $C$, deleting one vertex at a time from $C$ in $G$
	and verifying whether the obtained graph is bipartite, and in this way we can find
	the intersection of both odd cycles in polynomial time. 
	Suppose that $L^c_G$ does not have two odd cycles that share exactly one vertex, then by \cref{oi123ji12i}, the vertex set of $G$ admits
	the partition
	\[
	V(G)=\corona G \,\dot{\cup}\, N(\core G).
	\]
	Since $N(\core G)$ can be computed in polynomial time once $\core G$
	is known, it follows that
	\[
	\corona G = V(G)\setminus N(\core G)
	\]
	can also be determined in polynomial time.  If the intersection of both odd cycles is a single vertex $x$,
	then it is easy to see that $x\notin\corona G$, moreover $G-x$ is a
	bipartite graph with $\alpha(G-x)=\alpha(G)$ and $\corona G=\corona{G-x}$ and $\core{G}=\core{G-x}$.
	But analogously by \cref{TheoremXXXLM} applied to $G-x$ it follows that 
	\[
	\corona G = V(G)\setminus N(\core G)
	\]
	Therefore,
	\cref{asio123ji12asio123ji12} is solvable in polynomial time. 
\end{proof}

Using the same approach as in the proof of \cref{asij123j}, namely by
computing the Larson decomposition and applying
\cref{aij3i2j1i} together with the additivity of $\alpha$, we obtain the
following result.

\begin{theorem}
	The \cref{asio123ji12asXXX} are solvable in polynomial time.
\end{theorem}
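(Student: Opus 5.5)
The statement to prove is that Problem~\ref{asio123ji12asXXX} (computing $\alpha(G)$ for a graph with at most two odd cycles) is solvable in polynomial time. The plan reduces $\alpha(G)$ to two quantities, one on each side of Larson's decomposition, and shows that each can be computed from a maximum matching.

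First I compute a maximum critical independent set $J$ of $G$ in polynomial time \cite{zhang1990finding}. By item 4 of \cref{larsonthm}, this gives $L(G)=J\cup N(J)$ and $L^c(G)=V(G)-L(G)$. Both induced subgraphs $L_G$ and $L_G^c$ then follow at once.

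Next I compute $\alpha(L_G)$. By \cref{larsonthm}, $L_G$ is a K\H{o}nig--Egerv\'ary graph, so
\[
\alpha(L_G)=|L(G)|-\mu(L_G).
\]
The matching number $\mu(L_G)$ is polynomial-time computable by Edmonds' algorithm \cite{edmonds1965paths}.

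Then I treat $L_G^c$.
\begin{itemize}
\item If $L^c(G)=\emptyset$, then $\alpha(G)=\alpha(L_G)$ and I am done.
\item Otherwise $L_G^c$ is $2$-bicritical by the Observation following \cref{larsonthm}. Because it is an induced subgraph of $G$, every cycle of $L_G^c$ is a cycle of $G$, so $L_G^c$ has at most two odd cycles. Hence \cref{aij3i2j1i} applies to it. I decide connectivity of $L_G^c$ by BFS, compute $\mu(L_G^c)$, and set
\[
\alpha(L_G^c)=|L^c(G)|-\mu(L_G^c)-1 \quad\text{if $L_G^c$ is connected},
\]
\[
\alpha(L_G^c)=|L^c(G)|-\mu(L_G^c)-2 \quad\text{if $L_G^c$ is disconnected}.
\]
\end{itemize}
Finally, item 1 of \cref{larsonthm} gives $\alpha(G)=\alpha(L_G)+\alpha(L_G^c)$. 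Each step above is polynomial, so $\alpha(G)$ is computed in polynomial time.

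The only step needing care is confirming that the hypothesis of \cref{aij3i2j1i} holds for $L_G^c$. This rests on two facts: $L_G^c$ inherits the bound on odd cycles from $G$ (subgraphs cannot create cycles), and $L_G^c$ is $2$-bicritical whenever it is nonempty. Once this is checked, the argument is the same as the first paragraph of the proof of \cref{asij123j}.
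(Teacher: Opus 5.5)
Your proposal is correct and is essentially the paper's argument. The paper likewise computes the Larson decomposition via a maximum critical independent set, obtains $\alpha(L_G)$ from the K\H{o}nig--Egerv\'ary identity and $\alpha(L_G^c)$ from \cref{aij3i2j1i}, and adds the two using item 1 of \cref{larsonthm}; you additionally make explicit the connectivity test and the check that $L_G^c$ is $2$-bicritical with at most two odd cycles, which the paper leaves implicit.
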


To conclude, we point out a natural direction for further research.

\begin{problem}
	Characterize all graphs that satisfy the core--corona partition
	\[
	V(G)=\corona G \,\dot{\cup}\, N(\core G).
	\]
\end{problem}

\begin{conjecture}
	Let $G$ be a graph such that every pair of odd cycles in $L_{G}^{c}$
	is vertex-disjoint. Then $\core G$ and $\corona G$ can be computed in
	polynomial time.
\end{conjecture}

\section*{Acknowledgments}

	This work was partially supported by Universidad Nacional de San Luis, grants PROICO 03-0723 and PROIPRO 03-2923, MATH AmSud, grant 22-MATH-02, Consejo Nacional de Investigaciones
	Cient\'ificas y T\'ecnicas grant PIP 11220220100068CO and Agencia I+D+I grants PICT 2020-00549 and PICT 2020-04064.

	\section*{Declaration of generative AI and AI-assisted technologies in the writing process}
	During the preparation of this work the authors used ChatGPT-3.5 in order to improve the grammar of several paragraphs of the text. After using this service, the authors reviewed and edited the content as needed and take full responsibility for the content of the publication.

\section*{Data availability}

Data sharing not applicable to this article as no datasets were generated or analyzed during the current study.

\section*{Declarations}

\noindent\textbf{Conflict of interest} \ The authors declare that they have no conflict of interest.

\bibliographystyle{apalike}

\bibliography{TAGcitasV2025}

\end{document}